\newtheorem{thm}{Theorem}
\newtheorem{cor}{Corollary}
\newtheorem{lem}[cor]{Lemma}
\newtheorem{prop}[cor]{Proposition}
\theoremstyle{definition}
\theoremstyle{remark}
\newtheorem{rem}{Remark}
\numberwithin{equation}{section}
\renewcommand{\t}{t}
\newcommand{\set}[1]{\left\{#1\right\}}
\newcommand{\Real}{\mathbb R}
\newcommand{\func}[1]{\ensuremath{\operatorname{#1} } }
\newcommand{\Div}[0]{\func{div}}
\newcommand{\dist}[0]{\mathrm{dist}}
\newcommand{\tr}[0]{\func{tr}}
\newcommand{\R}{\mathbb{R}}
\newcommand{\Jdu}[2]{J_{#1}{}^{ #2}}
\newcommand{\Ric}[0]{\func{Ric}}
\renewcommand{\d}{\mathrm{d}}
\newcommand{\vol}{\mu_g}
\keywords{Ricci flow, solitons, ordinary differential equations}
\subjclass[2010]{53C44}
\date{March 27, 2013}
\address{Department of Pure Mathematics and Mathematical Statistics, University of Cambridge, Cambridge, UK.}
\email{j.bernstein@dpmms.cam.ac.uk}
\address{Department of Applied Mathematics and Theoretical Physics, University of Cambridge, Cambridge, UK.}
\email{t.mettler@damtp.cam.ac.uk}
\title[2D gradient Ricci solitons]{Two-dimensional gradient Ricci solitons revisited}
\author[J. Bernstein \and T. Mettler]{Jacob Bernstein \and Thomas Mettler}
\thanks{The first author was supported by the EPSRC Programme Grant entitled ``Singularities of Geometric Partial Differential Equations'' grant number EP/K00865X/1. The second author was supported by  Forschungsinstitut f\"ur Mathematik at ETH Z\"urich and by a postdoctoral fellowship of Schweizerischer Nationalfonds SNF, PA00P2\textunderscore 142053.}
\begin{document}

\begin{abstract}
In this note, we complete the classification of the geometry of non-compact two-dimensional gradient Ricci solitons. As a consequence, we obtain two corollaries:  First, a complete two-dimensional gradient Ricci soliton has bounded curvature. Second, we give examples of complete two-dimensional expanding Ricci solitons with negative curvature that are topologically disks and are not hyperbolic space. 
\end{abstract}

\maketitle

\section{Introduction}
Recall, a triple $(M,g,X)$ consisting of a smooth manifold $M$, a smooth Riemannian metric $g$ on $M$ and a smooth vector field $X \in \Gamma(TM)$ is said to be a \textit{Ricci soliton with expansion constant} $\lambda$, if it satisfies
\begin{equation}\label{ricsol}
-2\Ric(g)=L_X g-2\lambda g
\end{equation}
where $L_Xg$ denotes the Lie derivative of $g$ with respect to $X$. For such a triple the Ricci flow equation
$$
\frac{\d}{\d t}g_t=-2\Ric(g_t)
$$
with initial condition $g_0=g$ has, in the $t$-interval on which $1-2\lambda t>0$, the solution
$$
g_t=(1-2\lambda t)\phi_t^*g,
$$
where $\phi_t : M \to M$ is the (possibly locally defined) time $t$ flow of $X$. When $\lambda=0$, the soliton is said to be steady, when $\lambda>0$, the soliton is said to be shrinking, and when $\lambda<0$, the soliton is said to be expanding. 

Let $\nabla$ denote the Levi-Civita connection of $g$. If $X=\nabla f$ for some smooth real-valued function $f$ on $M$, then we say $(M,g,f)$ is a \emph{gradient Ricci soliton} and $f$ is its \emph{soliton potential}.

It has been known for some time that closed two-dimensional Ricci solitons must have constant curvature -- see \cite[Theorem 10.1]{MR954419} and \cite{MR1375255} -- also \cite{MR2231924} and \cite{MR1094470}.  It was also well-known that any two-dimensional gradient Ricci soliton must admit a non-trivial Killing vector field -- see, for instance, the editor's footnote of \cite[p.~241-242]{MR2145154}.  However, the possible geometries in the open case were not completely determined. 

The most complete analysis in this direction was in \cite[Appendix A and B]{MR1357953} where all rotationally symmetric two-dimensional self-similar solutions to the logarithmic fast diffusion equation were explicitly computed.  
As the logarithmic fast diffusion equation is the evolution of the conformal factor -- with respect to a fixed flat background -- of a metric evolving under the Ricci flow, the analysis of \cite{MR1357953} gives many examples of two-dimensional Ricci solitons.   More in the spirit of the present note is  \cite{Kotschwar}, where the Killing vector field is used to show that all positively curved  two-dimensional expanding Ricci solitons are the rotationally symmetric examples of \cite{MR1969853}.  A more complete overview of known results may be found in \cite[Chap. 1]{MR2302600}. 

Motivated by an observation of \cite{JacobThomasEntropy} connecting two-dimensional steady gradient Ricci solitons with minimal surface metrics, we reduce the classification of two-dimensional gradient Ricci solitons to the analysis of a simple autonomous first order \textsc{ode}. The analysis of this \textsc{ode} gives:
 
\begin{thm}\label{ListofSolitonsThm}
 The following twelve one-parameter families of rotationally symmetric metrics are gradient Ricci soliton metrics on either the disks  $\mathbb{D}=\set{x:|x|<1}\subset \Real^2$ or $\Real^2$ or the annuli $\mathbb{D}_*=\mathbb{D}\backslash \set{0}$ or $\Real^2_*=\Real^2\backslash \set{0}$:
 \begin{enumerate}
 \item Steady solitons:
   \begin{enumerate} 
   \item $(\Real^2, g_1(\nu))$ -- The cigar solitons of Hamilton \cite{MR954419} -- i.e. complete positively curved steady solitons  -- the parameter $\nu$ corresponds to scaling of the metric;
   \item  $(\Real^2, g_2(\nu))$ -- Incomplete steady solitons with unbounded negative curvature -- called the exploding solitons in \cite{MR2302600} -- the parameter $\nu$ corresponds to scaling of the metric;
   \item  $(\Real^2_*, g_3(\nu))$ -- Incomplete negatively curved steady solitons asymptotic to a cylinder of radius $\nu\geq 0$ at $0$ and to $g_2(1)$ at $\infty$.
   \end{enumerate}
  \item Shrinking solitons:
   \begin{enumerate}
    \item $(\mathbb{D}, g_{4}^\pm(\nu))$ -- Two families of incomplete positively curved shrinking solitons which extend smooth\-ly to $\bar{\mathbb{D}}$ so that $\partial \bar{\mathbb{D}}$ is totally geodesic and has length $2\pi$ -- the parameter $\nu=\dist(0, \partial \mathbb{D})$ satisfies $\nu\in (1,\infty)$ for $g_4^+$ and $\nu\in (\frac{\pi}{2}, \infty)$ for $g_4^-$;     \item $(\Real^2, g_{5}(\nu))$ -- Incomplete negatively curved  shrinking solitons asymptotic to  $g_2(\nu)$;
  \end{enumerate}
   \item Expanding solitons:
     \begin{enumerate}
       \item $(\Real^2, g_{6}(\nu))$ -- Complete positively curved expanding solitons asymptotic to the end of a flat cone of angle $\nu\in(0, 2\pi)$ -- see \cite{MR1969853};
     \item $(\Real^2, g_{7}(\nu))$ -- Complete negatively curved expanding solitons asymptotic to to the end of a flat cone of angle $\nu\in (2\pi,\infty)$;
       \item $(\Real^2_*, g_{8}(\nu))$ -- Complete negatively curved expanding solitons asymptotic to a hyperbolic cusp at $0$ and to the end of a flat cone of angle $\nu\in(0,\infty) $ at $\infty$;
       \item $(\mathbb{D}_*, g_{9}(\nu))$  -- Incomplete negatively curved expanding solitons asymptotic to to the end of a flat cone of  angle $\nu \in(0,\infty)$ at $0$ and extending  smoothly to $\bar{\mathbb{D}}_*$ so that $\partial \bar{\mathbb{D}}$ is totally geodesic and of length $2\pi$;
       \item $(\Real^2, g_{10}(\nu))$ -- Incomplete negatively curved  expanding solitons asymptotic to  $g_2(\nu)$;
       \item $(\Real^2_*, g_{11}(\nu))$ -- Incomplete negatively curved expanding solitons asymptotic to a hyperbolic cusp at $0$ and to $g_2(\nu)$ at $\infty$.  
        \item $(\mathbb{D}_*, g_{12}(\nu))$ -- Incomplete negatively curved  expanding solitons asymptotic to  $g_2(\nu)$ at $0$ and extending  smoothly to $\bar{\mathbb{D}}_*$ so $\partial \bar{\mathbb{D}}$ is totally geodesic and of length $2\pi$;
       \end{enumerate}
\end{enumerate}
\end{thm}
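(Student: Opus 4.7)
The plan is to exploit the known existence of a nontrivial Killing vector field on any two-dimensional gradient Ricci soliton (see \cite[p.~241--242]{MR2145154}) to reduce \eqref{ricsol} to a first-order autonomous ODE, and then to read off each family of the theorem from the phase portrait of that ODE.

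On the open set where the isometric $S^{1}$-action is free I would write the metric in warped-product form $g=\d r^{2}+\varphi(r)^{2}\,\d\theta^{2}$ and take the potential $f=f(r)$. Since in dimension two $\Ric(g)=Kg$ with $K=-\varphi''/\varphi$, the gradient soliton identity $\Hess f=(\lambda-K)g$ decomposes into the two scalar equations
$$
f''=\lambda+\varphi''/\varphi,\qquad \varphi'f'=\lambda\varphi+\varphi''.
$$
Comparing these yields $(\log f')'=(\log\varphi)'$, hence $f'=A\varphi$ for a real constant $A$, and substituting back produces the second-order autonomous equation $\varphi''=A\varphi\varphi'-\lambda\varphi$. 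Setting $\sigma=\varphi'$ and passing to $u=\varphi^{2}/2$ as independent variable (so that $\d u=\varphi\,\d r$) converts this into the simple autonomous first-order ODE
$$
\frac{\d\sigma}{\d u}=\frac{A\sigma-\lambda}{\sigma},
$$
which is separable and admits a closed-form first integral.

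Next I would normalise $\lambda\in\{-1,0,1\}$ (by rescaling $g$) and $A\in\{0,\pm 1\}$ (by rescaling and reflecting $f$), leaving a single constant of integration as the remaining modulus. For each resulting phase portrait I would enumerate the maximal trajectories and record, along each, the maximal $r$-interval, the sign of the Gaussian curvature $K=\lambda-A\sigma$, and the leading asymptotics of $\varphi$ at each endpoint; the non-trivial equilibrium $\sigma=\lambda/A$ (when both $\lambda$ and $A$ are non-zero) already corresponds to the cylindrical/cusp asymptote, and the separatrices through it partition the trajectories into the classes that will become the different families.

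The final step is the geometric translation. The endpoint behaviour $\varphi\to 0$ with $\varphi'\to\pm 1$ at finite $r$ corresponds to a smooth interior point (closing the interval into a disk); $\varphi\to 0$ with $\varphi'\to 0$ to a hyperbolic cusp; $\varphi$ tending to a positive constant to an asymptotic cylinder, or, if $r$ terminates at a finite value, to a totally geodesic boundary circle of length $2\pi$ in the normalisation; linear growth of $\varphi$ to an asymptotic flat cone whose angle is the slope; and the cigar of \cite{MR954419} together with the exploding soliton of \cite{MR2302600} arise as two distinguished trajectories for $\lambda=0$. Matching trajectories to these geometric invariants identifies each with exactly one family $g_{1}(\nu),\dots,g_{12}(\nu)$, the parameter $\nu$ being inherited from the constant of integration and equal, depending on the family, to the asymptotic cylinder radius, cone angle, or distance-to-boundary. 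The main obstacle is precisely this last case analysis: one must simultaneously establish exhaustiveness (every maximal trajectory yields exactly one of the twelve geometric types) and distinctness (within each family distinct values of $\nu$ give non-isometric metrics), and it is here that the count of precisely twelve families emerges.
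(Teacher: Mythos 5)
Your proposal is correct in outline and is essentially the paper's own approach: the paper makes the same reduction, writing $g=\d r^2+b(r)^2\d\theta^2$, setting $t=\tfrac{1}{4}b^2$ (your $u/2$) and $a=1/b'$ (the reciprocal of your $\sigma$), so that your equation $\d\sigma/\d u=(A\sigma-\lambda)/\sigma$ is exactly the paper's autonomous \textsc{ode} \eqref{MasterODE} with $A=2\mu$, after which the paper, like you, normalizes by two scaling actions plus a translation into six sign-cases and extracts the twelve families from the phase portraits using precisely the geometric dictionary you describe (Lemmas \ref{smoothext} and \ref{ODEtoGeomProp}). Two harmless slips worth noting: $\d u=\varphi\,\d\varphi$, not $\varphi\,\d r$; and $A$ is normalized not by rescaling $f$ (which is not a symmetry of the soliton equation, since $\nabla^2(cf)=c(\lambda-K)g$) but by the fiber-rescaling local isometry $(\varphi,\theta)\mapsto(\varphi/c,c\theta)$ -- the paper's first $\R^+$-action -- whose failure to be a global isometry is exactly why the cone angle survives as the parameter $\nu$ in several of the families.
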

\begin{rem}
 The metrics $g_4(\nu), g_9(\nu)$ and $g_{12}(\nu)$ may be ``doubled'' to give $C^{2,1}$ metrics on a doubled surface. The doubled metrics are not themselves Ricci solitons.
\end{rem}
\begin{rem}
All these metrics seem to have been written down in \cite{MR1357953}, however their geometric properties are not fully discussed there.
\end{rem}

A further consequence of the analysis of the \textsc{ode} is that the above list contains all possible models of non-constant curvature two-dimensional gradient Ricci solitons.
\begin{thm}\label{MainClassificationThm}
 Let $(M, g, f)$ be a smooth gradient Ricci soliton with $M$ a connected orientable two-manifold. If $M_*=\set{p\in M: \nabla_g K_g\neq 0}$ is non-empty and $\pi_1(M_*)$ is cyclic, then  
there is a $0<R\leq \infty$, a  metric $\bar{g}$ on $\mathbb{D}_*(R)=\set{x:0<|x|<R}\subset \Real^2$ and an isometric immersion $i:M_*\to \mathbb{D}_*(R)$ so that:
 \begin{enumerate}
   \item $K_{\bar{g}}\neq 0$; 
  \item $(\mathbb{D}_*(R), \bar{g})$ is maximal in the sense that if  $(\hat{M}, \hat{g}, \bar{f})$ is a two-dimensional connected gradient Ricci soliton  and $\hat{i}:(\mathbb{D}_*(R), \bar{g})\to (\hat{M},\hat{g})$ is a smooth isometric embedding, then $\hat{M}\backslash \hat{i}(\mathbb{D}_*(R))=\set{p}$ and $\nabla_{\hat{g}} K_{\hat{g}}(p)=0$;
  \item $\bar{g}$ is rotationally symmetric, that is in polar coordinates $(r,\theta)$
   $$\bar{g}= \d r^2+b^2(r) \d\theta^2$$
    for some positive functions $b\in C^\infty((0,R))$;
   \item There are $\alpha,\beta>0$ so that if $\bar{g}_\alpha= \d r^2+\alpha^2 b^2(r) \d\theta^2$, then $(\mathbb{D}_*(R),\beta^2 \bar{g}_\alpha)$
        may be isometrically embedded in one of the models given in Theorem \ref{ListofSolitonsThm}.
\end{enumerate}
\end{thm}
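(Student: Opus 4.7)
The argument proceeds in two independent parts: first, construct a maximal rotationally symmetric soliton $(\mathbb{D}_*(R),\bar{g})$ containing an isometric image of $(M_*, g|_{M_*})$; second, identify this model, up to a two-parameter rescaling, with one of the twelve families of Theorem \ref{ListofSolitonsThm}.

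The key tool for the first part is the Killing vector field $V:=J\nabla_g f$, where $J$ is the almost complex structure on $M$ determined by $g$ and a fixed orientation. Its existence follows because, in two dimensions, \eqref{ricsol} reduces to $\Hess_g f=(\lambda-K_g)g$, so $\nabla_g f$ is a gradient conformal vector field; combined with $\nabla J=0$ this gives $\nabla_iV_j=(\lambda-K_g)J_{ji}$, antisymmetric in $(i,j)$, whence $V$ is Killing. Consequently $V$ preserves $K_g$ and is tangent to its level curves. Moreover $V$ is nowhere zero on $M_*$: a zero of $V$ forces $\nabla_g f=0$, and substitution into \eqref{ricsol} and its first differential consequences then forces $\nabla_g K_g=0$, contradicting the definition of $M_*$.

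We next build rotationally symmetric coordinates. Fix a connected component $\Sigma_0$ of a regular level curve of $K_g$ in $M_*$, and let $r$ denote signed arclength along the unit-speed geodesic orthogonal to the level curves of $K_g$ through a given point. Since $V$ is a Killing field whose orbits coincide with the level curves of $K_g$, the rigidity of isometries (any isometry is determined by its $1$-jet at one point) shows that all orbits of $V$ share a common period; let $\theta$ be the constant rescaling of the $V$-flow parameter that normalizes this period to $2\pi$. In these coordinates $g=\d r^2+b^2(r)\d\theta^2$ for a smooth positive $b$, defined on the maximal interval on which $b$ extends smoothly and remains positive; after translation this interval becomes $(0,R)$ with $R\in(0,\infty]$, and we set $\bar{g}=\d r^2+b^2(r)\d\theta^2$ on $\mathbb{D}_*(R)$. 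The cyclic hypothesis on $\pi_1(M_*)$ ensures that the chart descends from the universal cover of $M_*$ to a globally defined isometric immersion $i:M_*\to\mathbb{D}_*(R)$. Maximality is then essentially tautological: any smooth rotationally symmetric soliton extension of $(\mathbb{D}_*(R),\bar{g})$ can only add a single pole of the $S^1$-action, at which $\nabla K$ must vanish, since otherwise $b$ would extend smoothly past one endpoint of $(0,R)$.

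For the second part, substituting $g=\d r^2+b^2(r)\d\theta^2$ and $f=f(r)$ into \eqref{ricsol} yields an ODE system in $r$ which, after elimination of $f$, reduces to the autonomous first-order \textsc{ode} in $b$ analyzed in the proof of Theorem \ref{ListofSolitonsThm}. The remaining scaling symmetries of this \textsc{ode} are precisely the homothety $g\mapsto\beta^2 g$ and the independent rescaling $b\mapsto\alpha b$; choosing $(\alpha,\beta)$ appropriately matches $(\mathbb{D}_*(R),\beta^2\bar{g}_\alpha)$ with an isometric subset of one of the twelve families $g_i(\nu)$. The main obstacle is the global descent in the third paragraph---ensuring that the local polar chart extends consistently over $M_*$ given only the cyclic-$\pi_1$ assumption---together with correctly enumerating the asymptotic behaviors at the two ends $r=0$ and $r=R$ so that they align with the enumeration in Theorem \ref{ListofSolitonsThm}.
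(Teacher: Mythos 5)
Your overall strategy tracks the paper's (local rotational symmetry coming from the Killing field $J\nabla_g f$, reduction to the autonomous \textsc{ode}, then a global patching step using the cyclic $\pi_1$ hypothesis), but the global step --- which you yourself flag as ``the main obstacle'' --- contains a genuine gap rather than a proof. You assert that ``all orbits of $V$ share a common period,'' but the orbits of the Killing field need not be closed at all: when $M_*$ is simply connected (for instance the universal cover of the model $g_8$, one of the complete solitons highlighted in the paper's second corollary), the level curves of $K_g$ are lines and there is no period to normalize to $2\pi$. Your rigidity argument is correct only once a single closed orbit is known to exist. Consequently the sentence ``the cyclic hypothesis on $\pi_1(M_*)$ ensures that the chart descends from the universal cover of $M_*$'' is precisely the point that requires an argument, and none is supplied.

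The paper closes this gap with a developing-map/holonomy construction that your plan lacks: analytic continuation of the local isometric embeddings into a fixed model $(N,g)=\mathbb{D}_*(R)$ defines a homomorphism $\Gamma:\pi_1(M_*,p_0)\to \mathrm{Isom}(N,g)$; orientability of $M$ forces the image into $\mathrm{Isom}^+(N,g)$; and Lemma \ref{dumblem} --- proved using the fact that non-constant solutions of \eqref{MasterODE} are strictly monotone, so $K$ is strictly monotone in $r$ and every orientation-preserving isometry must preserve $r$ --- identifies $\mathrm{Isom}^+(N,g)\cong\mathbb{S}^1$. Since $\pi_1(M_*)$ is cyclic, the holonomy is generated by a single rotation $\psi_\phi$, and the angle $\phi$ may be an irrational multiple of $2\pi$; this is exactly why conclusion (4) of the theorem rescales the angular coordinate by $\alpha=2\pi\phi^{-1}$ instead of producing an immersion into the model itself. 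Your proposal has no analogue of Lemma \ref{dumblem} (you never rule out holonomy elements that reflect $\theta$ or translate $r$) and no mechanism for irrational rotation angles: a common-period normalization could at best handle holonomy angles of the form $2\pi/k$, $k\in\mathbb{Z}$, and would leave the parameter $\alpha$ in conclusion (4) unexplained. A smaller inaccuracy: after substituting $g=\d r^2+b^2(r)\d\theta^2$, the soliton equation is not an autonomous first-order \textsc{ode} in $b$; the paper must first pass to the variable $t=\tfrac14 b(r)^2$ and the function $a(t)$ of \eqref{speccoord} before obtaining \eqref{MasterODE}, and the justification that $f=f(r)$ (via $\d f=\d\log|K|$ on $M_*$) should be made explicit.
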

\begin{rem}
If $g= \d r^2+b^2(r) \d\theta^2$ is a metric on $\mathbb{D}_*(R)$, then for any $\alpha>0$ the metric $g_{\alpha}=\d r^2+\alpha^2 b^2(r) \d\theta^2$ on $\mathbb{D}_*(R)$ is locally isometric to $g$. In particular, $g$ is a gradient Ricci soliton metric if and only if $g_\alpha$ is.  Likewise scaling a Ricci soliton metric gives a new Ricci soliton metric with scaled expansion constant.
\end{rem}
\begin{rem}
Neither the orientability condition on $M$ nor the condition on $\pi_1(M_*)$ can be removed because several locally isometric but not globally isometric subsets of a fixed model soliton may be glued together to yield counter-examples.  
\end{rem}

Theorems \ref{ListofSolitonsThm} and \ref{MainClassificationThm} resolve two questions regarding the structure of non-compact two-di\-men\-sional solitons (see \cite[p.~61]{MR2302600}).
Specifically,
\begin{cor}
 A complete two-dimensional gradient Ricci soliton has bounded curvature.
\end{cor}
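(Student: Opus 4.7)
The plan is to deduce the curvature bound from Theorems \ref{ListofSolitonsThm} and \ref{MainClassificationThm} by a case analysis of the twelve models. Let $(M, g, f)$ be a complete two-dimensional gradient Ricci soliton. If $\nabla K_g \equiv 0$, connectedness of $M$ forces $K_g$ to be constant and there is nothing to prove. Otherwise the open set $M_* = \{p \in M : \nabla K_g(p) \neq 0\}$ is non-empty; by continuity of $K_g$, every point of $M \setminus M_*$ either lies in the interior of that set (where $K_g$ is locally constant) or is a limit of points of $M_*$, so it suffices to bound $|K_g|$ on $M_*$.

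To invoke Theorem \ref{MainClassificationThm}, I would first pass to the orientation double cover of $M$ if needed, and then to a suitable (for instance universal) cover of each connected component of $M_*$ so as to arrange cyclic $\pi_1$; both operations preserve completeness of the ambient soliton (by lifting $f$) and preserve $|K|$ pointwise. On each such component Theorem \ref{MainClassificationThm} produces an isometric immersion into some maximal rotationally symmetric model $(\mathbb{D}_*(R), \bar g)$, which after rescaling is an isometric piece of one of the twelve families $g_1, \ldots, g_{12}$. Direct inspection of the list shows that each \emph{complete} model ($g_1$, $g_6$, $g_7$, $g_8$) has uniformly bounded curvature (the cigar has $K \to 0$; the cone-asymptotic expanders $g_6$, $g_7$ likewise satisfy $K \to 0$; and $g_8$ interpolates between a constant-curvature hyperbolic cusp and a flat cone), whereas the \emph{incomplete} models only exhibit unbounded $|K|$ as one approaches an incomplete end, typically where the metric becomes asymptotic to the exploding soliton $g_2(\nu)$.

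The main obstacle is the remaining rigidity step: using completeness of $M$ to rule out that the immersed image of $M_*$ reaches such a bad end of its target model. My plan is that, given a sequence $p_n \in M_*$ whose images $i(p_n)$ approach an incomplete end at finite $\bar g$-distance, one uses the local-isometry property of $i$ (by lifting short curves through the immersion) to produce a Cauchy sequence in $(M,g)$ whose limit $p_\infty$ exists by completeness. Either $p_\infty \in M_*$, in which case the immersion extends past the supposed end and contradicts maximality of $(\mathbb{D}_*(R), \bar g)$; or $\nabla_g K_g(p_\infty) = 0$, in which case $K_g(p_n) \to K_g(p_\infty)$ is finite, contradicting curvature blow-up. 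The delicate points are controlling distances under the only-locally-isometric $i$ and ensuring that such an extension really produces the contradiction with maximality claimed in Theorem \ref{MainClassificationThm}.
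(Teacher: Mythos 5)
Your overall route is exactly the paper's: the corollary is asserted there as a direct consequence of Theorems \ref{ListofSolitonsThm} and \ref{MainClassificationThm} (no separate proof is written), and your reductions are sound. Proposition \ref{densityprop} makes $M\setminus M_*$ a set of isolated points, so $\sup_M|K_g|=\sup_{M_*}|K_g|$; passing to the universal cover of $M_*$ (which is automatically orientable and simply connected, so the double-cover step is even superfluous) legitimately arranges the hypotheses of Theorem \ref{MainClassificationThm}, and you correctly use only completeness of the ambient $M$, never of the cover. You also correctly isolate the only dangerous ends: those where the model is asymptotic to $g_2(\nu)$, i.e.\ where the solution of \eqref{MasterODE} has $a\to 0$, so that by \eqref{curvwitha} $K\to-\infty$, and these ends sit at finite $\bar g$-distance.

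The genuine gap is that your mechanism for the rigidity step runs in the wrong direction. A local isometry satisfies only $d_{\bar g}(i(x),i(y))\le d_g(x,y)$ -- it contracts distances \emph{toward the target} -- so a sequence whose images are Cauchy near the bad end yields no Cauchy sequence upstairs, and paths in the target cannot in general be lifted through an immersion (only through covering maps; consider the inclusion of an open disk into the plane). Thus ``lifting short curves through $i$ to produce a Cauchy sequence in $(M,g)$'' fails as stated. The repair is to transport geodesics \emph{forward} rather than curves backward: pick $p\in M_*$ whose image lies at radial distance $\delta<\infty$ from the bad end, and let $\gamma$ be the geodesic of the complete metric $g$ with $\d i(\gamma'(0))=\partial_r$ (after lifting $p$ to the cover). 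As long as $\gamma$ stays in $M_*$, its lift develops onto the radial ray, so $K(\gamma(s))$ equals the (scaled) model curvature at radius $r(i(p))+s$ and $|\nabla K(\gamma(s))|$ stays bounded away from zero for $s$ short of $\delta$; hence by continuity of $\nabla K$ the geodesic cannot exit $M_*$ through one of the isolated points of $M\setminus M_*$ (where $\nabla K=0$) before parameter $\delta$, and so $K(\gamma(s))\to-\infty$ as $s\to\delta$, contradicting continuity of $K$ at $\gamma(\delta)\in M$, which exists by completeness. Note also that the maximality branch of your dichotomy is both unnecessary -- continuity of $K$ at the limit point already gives the contradiction whether or not it lies in $M_*$ -- and not directly usable: item (2) of Theorem \ref{MainClassificationThm} concerns embeddings of the model into larger solitons, not extensions of the immersion $i$, which points the wrong way for that purpose.
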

\begin{cor}
 There exist complete two-dimensional expanding Ricci solitons with negative curvature that are topologically disks and are not hyperbolic space -- namely the metrics $g_7$ and the universal covers of the metrics $g_8$.
\end{cor}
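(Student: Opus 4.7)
The plan is to verify for each candidate metric -- $g_7$ itself and the universal cover of $g_8$ -- the four required properties: being topologically a disk, completeness, everywhere negative Gauss curvature, and non-isometry to hyperbolic space. The first three properties for $g_7$ are read off directly from Theorem \ref{ListofSolitonsThm}(3)(b): the underlying manifold is $\Real^2$, which is a topological disk, and $g_7$ is there stated to be a complete, negatively curved expanding soliton on $\Real^2$.

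For $(\Real^2_*, g_8)$ I would first note that $\pi_1(\Real^2_*)\cong\mathbb{Z}$, so its universal cover $\tilde M$ is diffeomorphic to $\Real^2$ and is in particular a topological disk. Pulling back $g_8$ along the covering map $\pi\colon\tilde M\to\Real^2_*$ gives a Riemannian metric $\tilde g$; since $\pi$ is a local isometry, completeness and negativity of the curvature transfer from $g_8$ to $\tilde g$, and $\tilde f=f\circ\pi$ is a soliton potential for $\tilde g$ with the same expansion constant. This upgrades $(\tilde M,\tilde g,\tilde f)$ to a complete negatively curved expanding gradient Ricci soliton that is topologically a disk.

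The one step requiring a genuine observation, rather than simple bookkeeping, is showing that neither of these metrics is isometric to any model of hyperbolic space. For this I would exploit the asymptotic description in Theorem \ref{ListofSolitonsThm}: both $g_7$ and $g_8$ are asymptotic at $\infty$ to the end of a flat cone of finite angle, so along any exhaustion by compact sets the Gauss curvature tends to $0$. On the other hand, hyperbolic space has constant strictly negative Gauss curvature (equal to the expansion constant $\lambda$ up to scale). Since the Gauss curvature of $g_7$ is not constant, $g_7$ is not hyperbolic; the same argument applied to the cone end at $\infty$ shows the local Gauss curvature of $\tilde g$ is not constant, hence $(\tilde M,\tilde g)$ is not hyperbolic either.

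The main -- and essentially only -- obstacle is the asymptotic argument that distinguishes these examples from $\mathbb{H}^2$; once one grants the asymptotic geometry stated in Theorem \ref{ListofSolitonsThm}, the rest is routine, and this is really why the classification theorem is the substantive input to the corollary.
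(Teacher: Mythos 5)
Your proposal is correct and matches the paper's (implicit) argument: the paper offers no separate proof of this corollary, treating it as an immediate consequence of Theorem \ref{ListofSolitonsThm}, and your verification simply fills in the routine details -- reading off completeness, negative curvature and the disk topology from the classification, lifting $g_8$ to the universal cover of $\Real^2_*\simeq$ an annulus (where completeness, curvature and the soliton potential $f=\log|K|$ all pull back), and ruling out hyperbolic space because the curvature is nonconstant (it tends to $0$ along the flat cone end, e.g.\ via $K=\lambda-2\mu/a$ with $a$ strictly monotone). No gaps.
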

\begin{rem}
 The metrics $g_7$ are parabolic and appear in \cite{MR1357953} -- though this does not seem to be widely known.  The universal covers of the metrics $g_8$ are non-parabolic.
\end{rem}

Finally, in Section \ref{VarSec} we give a simple variational characterization of two-dimensio\-nal gradient Ricci solitons and observe a connection with \cite{JacobThomasEntropy}.
 
\subsection*{Acknowledgments}
The authors would like to thank B. Kotschwar for his comments and for pointing out a serious error in an early draft.  We also thank P. Topping for bringing \cite{MR1357953} to our attention.

\section{Properties of two-dimensional Ricci solitons}\label{Sec2}
We will henceforth assume $M$ to be a connected  two-manifold. Then for a gradient Ricci soliton equation \eqref{ricsol} becomes
\begin{equation*}
 \nabla^2 f+ K g -\lambda g=0
\end{equation*}
which is equivalent to
$$
\Delta f =2(\lambda-K) \quad \text{and}\quad
\mathring{\nabla}^2 f=0.
$$ 
Here $K$ denotes the Gauss curvature, $\Delta$ the Laplacian, and $\mathring{\nabla}^2$ the trace-free Hessian of $g$. Following,  \cite[Appendix A]{JacobThomasEntropy}, covariant differentiation of $\mathring{\nabla}^2 f=0$ together with straightforward manipulations gives
$$
0=K \d f+\frac{1}{2} \d \Delta f=K \d f-\d K. 
$$
At a point $p \in M$ where $K\neq 0$ we thus have
\begin{equation*}
\d f = \d \log |K|
\end{equation*}
which yields
$$
\nabla^2 \log | K|= \nabla^2 f= (\lambda- K)g. 
$$
From this we see that
\begin{equation} \label{MasterEqn}
 \Delta \log | K|  =2(\lambda-K) \quad \text{and} \quad \mathring{\nabla}^2 \log | K| =0. 
\end{equation}
We have shown:
\begin{prop}\label{charsol}
Let $(M,g)$ be a  Riemannian two-manifold with non-zero Gauss curvature. Then $g$ is a gradient Ricci soliton metric with expansion constant $\lambda$ if and only if
$$
\mathring{\nabla}^2 \log |K| =0\quad \text{and}\quad \Delta \log |K| =2(\lambda-K).
$$
Moreover, if $(M, g, f)$ is a gradient Ricci soliton, then we may take $f=\log |K|$.  
\end{prop}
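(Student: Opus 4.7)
The plan is to repackage the calculation immediately preceding the statement as the forward implication and then observe that the converse is essentially automatic in dimension two.

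For the \emph{only if} direction everything has already been derived in the text above. Starting from the soliton equation $\nabla^2 f+(K-\lambda)g=0$, taking the trace and the trace-free parts separately gives $\Delta f=2(\lambda-K)$ and $\mathring{\nabla}^2 f=0$. Covariantly differentiating the trace-free equation and commuting derivatives (which in two dimensions introduces a factor of $K$) yields $K\,df=dK$, hence $df=d\log|K|$ wherever $K\ne 0$. Substituting this back into $\nabla^2 f=(\lambda-K)g$ produces $\nabla^2\log|K|=(\lambda-K)g$, and the two displayed equations are its trace and trace-free parts respectively.

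For the \emph{if} direction I would set $f:=\log|K|$, which is smooth because $M$ is connected and $K$ is nowhere zero, so $K$ has a definite sign and $|K|$ is smooth and positive. The key observation is that in dimension two the tensor equation $\nabla^2 f+(K-\lambda)g=0$ is equivalent to the conjunction of its trace and trace-free parts, namely $\Delta f=2(\lambda-K)$ and $\mathring{\nabla}^2 f=0$; both hold by hypothesis. Since $\Ric(g)=Kg$ in two dimensions and $L_{\nabla f}g=2\nabla^2 f$, this is exactly \eqref{ricsol}. The \emph{moreover} clause follows because the identity $d\tilde f=d\log|K|$, proved in the forward direction, forces any potential $\tilde f$ to differ from $\log|K|$ by a locally constant function, and such a constant does not affect the gradient vector field that generates the soliton.

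I do not anticipate any genuine obstacle: the only things requiring care are the reversibility of the trace/trace-free decomposition (which is specific to dimension two and makes the converse one line) and the smoothness of $\log|K|$ (which is guaranteed by connectedness of $M$ together with the nowhere-vanishing hypothesis on $K$).
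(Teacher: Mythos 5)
Your argument is correct and is essentially the paper's own proof: the forward direction reproduces the computation displayed immediately before the proposition (trace/trace-free split of the soliton equation, commutation of covariant derivatives giving $K\,\d f=\d K$, hence $\d f=\d\log|K|$ and $\nabla^2\log|K|=(\lambda-K)g$), while your converse via the two-dimensional equivalence of $\nabla^2 f+(K-\lambda)g=0$ with its trace and trace-free parts is exactly what the paper leaves implicit in ``We have shown.'' Your added remarks on the smoothness of $\log|K|$ and on the locally constant ambiguity of the potential are correct refinements of detail, not a different method.
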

\begin{rem}\label{killing}
Recall (c.f.~\cite[p.~11]{MR2302600}) that if $(M,g,\nabla f)$ is a gradient Ricci soliton and $M$ is an oriented surface, then $J(\nabla f)$ is a Killing vector field for $g$. Here, as usual, $J$ denotes the integrable almost complex structure induced by $g$ and the orientation, i.e.~$J$ rotates $v \in TM$ by $\pi/2$ in positive direction.  
\end{rem}
Proposition \ref{charsol} and Remark \ref{killing} reduce the classification of gradient Ricci solitons in dimension two to the study of a simple \textsc{ode}. 
Indeed, assume $(M,g)$ is a gradient Ricci soliton and $g$ does not have constant curvature.  We first observe that $M$ has non-vanishing curvature and has non-constant curvature almost everywhere.
\begin{prop} \label{densityprop}
If  $(M,g,f)$ is a connected two-dimensional gradient Ricci soliton and $M_*=\set{p\in M: \nabla K (p)\neq 0}\neq \emptyset$, then $K\neq 0$ on all of $M$ and $M\backslash M_*$ consists of isolated points.
\end{prop}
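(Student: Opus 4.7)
The plan is to deduce both conclusions from the identity $\d f = \d \log|K|$ established on $\set{K \neq 0}$ in the derivation preceding Proposition \ref{charsol}. Since $M_*$ is non-empty, $K$ is non-constant, so $U := \set{p \in M : K(p) \neq 0}$ is a non-empty open subset of $M$. On each connected component $V$ of $U$, the identity $\d f = \d \log|K|$ forces $f = \log|K| + c_V$ for some constant $c_V$. If $\set{K = 0}$ were non-empty, connectedness of $M$ would produce a component $V$ with a boundary point $q$ where $K(q) = 0$; along any sequence $V \ni p_n \to q$ we would then have $\log|K(p_n)| \to -\infty$ and hence $f(p_n) \to -\infty$, contradicting the smoothness (and hence local boundedness) of $f$ at $q$. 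Therefore $K \neq 0$ on all of $M$.

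For the second conclusion, differentiating $f = \log|K| + c_V$ yields $\nabla K = K \nabla f$, so because $K \neq 0$ we have $M \setminus M_* = \set{p \in M : \nabla f(p) = 0}$. Passing if necessary to the orientation double cover $\tilde M \to M$—which is a local diffeomorphism and so preserves discreteness in either direction—Remark \ref{killing} gives a globally defined Killing field $Y = J \nabla f$ on $\tilde M$, nonzero because $M_* \neq \emptyset$ forces $\nabla f \not\equiv 0$. The structure theorem for Killing fields states that the zero set of a nontrivial Killing field on a connected Riemannian manifold is a disjoint union of totally geodesic submanifolds of even codimension; on the surface $\tilde M$ this forces the set to be discrete, and so is its projection to $M$.

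The main obstacle is this structural fact about Killing fields, but in our setting it can be checked by hand: a direct computation gives $\nabla Y = (\lambda - K) J$ as a $(1,1)$-tensor, so at a zero $p$ of $Y$ one has either $\lambda \neq K(p)$—in which case $\nabla Y(p)$ is a nontrivial rotation of $T_p \tilde M$ and $p$ is an isolated zero of $Y$—or $\lambda = K(p)$, in which case the $1$-jet of $Y$ vanishes at $p$ and the standard unique-continuation property of Killing fields forces $Y \equiv 0$, contradicting nontriviality. Everything else is bookkeeping on the already-established identity $\d f = \d \log|K|$.
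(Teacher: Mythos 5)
Your proof is correct, and it reaches the conclusion by a genuinely different route from the paper's. The paper decomposes the bad set into $B_1=\set{K=0,\nabla K\neq 0}$, $B_2=\set{K=0,\nabla K=0}$, $B_3=\set{K\neq 0,\nabla K=0}$, invokes Aronszajn's unique continuation theorem for the elliptic equation $\Delta K-\nabla f\cdot \nabla K+2K^2+\lambda K=0$ to get density of $M_{**}$, kills $B_2$ via a leading-order expansion of $K$ in normal coordinates (which would otherwise create points of $B_1$), and handles $B_3$ by viewing $\nabla \log|K|$ as a conformal Killing field, locally the real part of a nontrivial holomorphic vector field. You instead exploit the exact first integral: $\d(f-\log|K|)=0$ on $\set{K\neq 0}$, so any component $V$ of that set has nonempty boundary (else it would be clopen in the connected $M$), and approaching a boundary point within $V$ forces $f\to-\infty$, contradicting smoothness; this disposes of $B_1$ and $B_2$ in one stroke with no elliptic unique continuation (indeed your global identity $\d K=K\,\d f$ already gives $B_1=\emptyset$ outright). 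For the isolated zeros you replace the holomorphic-vector-field argument by the computation $\nabla Y=(\lambda-K)J$ for $Y=J\nabla f$: zeros are nondegenerate unless $K(p)=\lambda$, in which case the vanishing $1$-jet plus the classical rigidity of Killing fields (an ODE-along-geodesics fact, your only external input, and a cousin of the paper's holomorphicity argument) forces $Y\equiv 0$, a contradiction; the orientation double cover correctly supplies a global $J$, while the paper's argument is local and so sidesteps orientability. Net effect: your proof is more elementary and self-contained, whereas the paper's route yields the stronger byproduct that $M_{**}$ is dense.
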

\begin{proof}
Set $M_*{}_*= \set{p\in M: K(p)\neq 0, \nabla K (p)\neq 0}$ and
decompose $B=M\backslash M_*{}_*$ into $B_1=\set{p\in M: K(p)=0, \nabla K (p)\neq 0}$, $B_2=\set{p\in M: K(p)=0, \nabla K (p)= 0}$ and  $B_3=\set{p\in M: K(p)\neq 0, \nabla K (p)= 0}$. As $M$ has non-constant curvature, $M_*{}_*$ is non-empty.  We claim it is dense in $M$.
This follows from standard unique continuation results for uniformly elliptic equations \cite{MR0092067} and the fact that $K$ satisfies
$$
\Delta K-\nabla f \cdot \nabla K +2K^2+\lambda K =0
$$
on $M$ -- see \cite[Lemma 1.11]{MR2302600}.
Hence, $B_1=\emptyset$.  Indeed, by Proposition \ref{charsol}, $\nabla f-\nabla \log |K_g|$ is covariant constant on $M_*{}_*$ and so is bounded on all of $M$.

Fix $p\in M$ and let $(r,\theta)$ be normal coordinates about $p$. In these coordinates, 
\begin{equation*}
 \Delta u= \partial_r^2 u +\frac{1}{r} \partial_r u +\frac{1}{r^2} \partial_\theta^2 u+ \alpha_1 \partial_r u +\frac{1}{r} \alpha_2 \partial_\theta u
\end{equation*}
where $\alpha_i$ are smooth functions -- depending on $u$ and $g$ -- defined near $p$. 
We next show that $B_2=\emptyset$.  Indeed,
by  \cite[Lemma 1.11]{MR2302600}, the unique continuation result \cite{MR0092067} and the expansion of the Laplacian in normal coordinates  near any $p\in B_2$ we have
\begin{equation*}
K=\alpha_1 r^n  \cos n\theta+ \alpha_2^2 r^n \beta \sin n \theta +o(r^n)
\end{equation*}
for some $n\geq 2$ and $\alpha^2_1+\alpha^2_2\neq 0$.  However, if this is true then there would exist a point in $B_1$ and so $B_2=\emptyset$.

Finally, we conclude that $B_3$ consists of isolated points.
Indeed, $\nabla \log |K_g|$ is a conformal Killing vector field on $M$ and so is (locally) the real part of a non-trivial holomorphic vector field.  In particular, it can only vanish at isolated points.
\end{proof}

By Proposition \ref{charsol}, in a neighborhood  of every point $p \in M_*$ there exists a non-trivial Killing vector field for $g$. It follows -- see  ~\cite[Lemma 1.18]{MR2302600} -- that there exists a local coordinate system $(r,\theta)$ defined on some neighborhood $U_p$ of $p$, such that
$$
g=\d r^2+b(r)^2\d \theta^2
$$
for some smooth positive real-valued function $b$ defined in some neighborhood of $r(p)$. On $U_p$ we thus have
$$
K(r)=-\frac{b^{\prime\prime}(r)}{b(r)}.
$$
Since $K(p)\neq 0$ we can assume (after possibly replacing $p$ with a nearby point), that $b^{\prime}(r)$ is non-vanishing in some simply connected neighborhood $V_p$ of $p$.  
Setting $\t=\frac{1}{4}b(r)^2$ it follows that $(\t,\theta) : V_p \to \R^2$ is a local coordinate system so that
\begin{equation}\label{speccoord}
g=\frac{a(\t)^2}{\t}\d \t^2+4\t \d \theta^2 
\end{equation}
for some positive function $a$ defined in some neighborhood of $\t(p)$. For $t>0$ in the domain of definition of $a$, this metric is smooth and has Gaussian curvature
\begin{equation}\label{curvform}
K(\t)=\frac{1}{2} \frac{a'(\t)}{a(\t)^3}. 
\end{equation}
For later reference we remark that the geodesic curvature of the curves $\set{t=t_0}$ is
\begin{equation} \label{geocurv}
 \kappa= \frac{1}{2 \sqrt{t_0 a(t_0)}}
\end{equation}
which follows easily from the first variation formula.

As $J(\nabla\log |K|)$ is a Killing vector field, we may take
\begin{equation}\label{ode1}
\frac{a(t)}{2\t} \frac{\t}{ a(t)^2}\left(\log \left|\frac{1}{2}\frac{a'(t)}{ a(t)^3} \right|\right)'=2\mu
\end{equation}
for some constant $\mu$ which is non-vanishing since $\d K(p)\neq 0$. 
On the other hand, from Proposition \ref{charsol} we obtain 
\begin{equation}\label{ode2}
 \frac{1}{2a(t)} \left( \frac{2\t}{a(t)} \left( \log \left|\frac{1}{2}\frac{a'(t)}{a(t)^3} \right|\right)'\right)' =- \frac{a(t)'}{a(t)^3} +2\lambda.
\end{equation}
Combining \eqref{ode1} and \eqref{ode2} gives the autonomous first-order \textsc{ODE}:
\begin{equation}\label{MasterODE}
a'(t)=4 \mu a(t)^2 \left(\frac{\lambda}{2\mu} a(t)-1\right). 
\end{equation}

Conversely we have:
\begin{lem}\label{smoothext}
If $a>0$ is defined on $(0,T)$ with $0<T\leq \infty$ and solves \eqref{MasterODE} for some $\lambda,\mu\neq 0$,  then every solution $b$ to 
\begin{equation}\label{intode}
a\left(\frac{b(r)^2}{4}\right)b^{\prime}(r)=1,
\end{equation}
defined on $(0,R)$ gives rise to a smooth rotationally symmetric gradient Ricci soliton 
$
g=\d r^2+b(r)^2 \d \theta^2
$
on $\mathbb{D}_*(R)$. Furthermore, there exists a solution $b$ to \eqref{intode} so that the associated metric extends smoothly to $\mathbb{D}(R)$ if and only if $\lim_{t \to 0}a(t)=1$.  In this case $ K(0)=\lambda-2\mu$ and $ \nabla K(0)=0$. 
\end{lem}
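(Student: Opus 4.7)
The plan is to reverse the derivation that led to \eqref{MasterODE}: first establish the soliton structure on $\mathbb{D}_*(R)$ via Proposition \ref{charsol} after passing to the model coordinates \eqref{speccoord}, and then handle smoothness across $r=0$ as a parity problem for the profile $b$.

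Since $a>0$, \eqref{intode} shows that $r\mapsto t=b(r)^2/4$ is a strictly increasing diffeomorphism from $(0,R)$ onto an open subinterval of $(0,T)$, and one directly checks that it carries $\d r^2+b(r)^2\d\theta^2$ into the form \eqref{speccoord}; in particular $g$ is a smooth rotationally symmetric metric on $\mathbb{D}_*(R)$. Substituting \eqref{MasterODE} into \eqref{curvform} gives $K(t)=\lambda-2\mu/a(t)$, which is nowhere zero when $a$ is non-constant (the constant solution $a\equiv 2\mu/\lambda$ yields a flat metric, which is trivially a soliton). A short calculation using the ODE shows $(\log|K|)'(t)=4\mu a(t)$, equivalently $(\log|K|)'(r)=2\mu b(r)$. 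In rotationally symmetric coordinates this is precisely the Killing condition $J\nabla\log|K|=2\mu\,\partial_\theta$, which forces $\mathring{\nabla}^2\log|K|=0$. One further differentiation in $r$ then yields $\Delta\log|K|=4\mu b'(r)=4\mu/a(t)=2(\lambda-K)$, so Proposition \ref{charsol} produces the gradient Ricci soliton structure with $f=\log|K|$.

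For the smooth extension, I invoke the standard criterion that $g=\d r^2+b(r)^2\d\theta^2$ extends smoothly over the origin of $\mathbb{D}(R)$ if and only if $b$ extends to an odd $C^\infty$ function with $b(0)=0$ and $b'(0)=1$. The condition $b'(0)=1$ combined with \eqref{intode} forces $\lim_{t\to 0^+}a(t)=1$. For the converse, since \eqref{MasterODE} is polynomial in $a$ without explicit $t$-dependence, the hypothesis $\lim_{t\to 0^+}a(t)=1$ extends $a$ analytically through $0$ with $a(0)=1$. The coordinate change $\d r=(a(t)/\sqrt{t})\,\d t$ then integrates to
\[
r(t)=\int_0^t\frac{a(s)}{\sqrt{s}}\,\d s=2\sqrt{t}\,\psi(t)
\]
for an analytic $\psi$ with $\psi(0)=1$. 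Setting $v=\sqrt{t}=b/2$ reveals $r=2v\,\psi(v^2)$ as an odd analytic function of $v$ with $r'(0)=2\neq 0$, so the inverse function theorem produces $b=2v$ as an odd analytic function of $r$ with $b(0)=0$, $b'(0)=1$. This gives smooth (indeed analytic) extension of $g$ to $\mathbb{D}(R)$. The curvature values fall out at once: $K(0)=\lambda-2\mu/a(0)=\lambda-2\mu$, and $\nabla K(0)=0$ because any smooth rotationally symmetric function on a disk is even in $r$.

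The main obstacle is the parity argument for $b$ near $r=0$. A brute-force induction on Taylor coefficients of $b$ works but is messy; the cleaner route, which I would take, is the substitution $v=\sqrt{t}$, after which the singular integral defining $r(t)$ becomes manifestly odd-analytic in $v$ and the whole issue reduces to the inverse function theorem.
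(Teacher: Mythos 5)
Your proposal is correct, and its overall skeleton matches the paper's: reverse the Section~2 computation to obtain the soliton structure on $\mathbb{D}_*(R)$, invoke the standard extension criterion \eqref{smoothextcond} of \cite[Lemma A.2]{MR2302600}, extend $a$ through $t=0$ using that \eqref{MasterODE} is autonomous with smooth right-hand side, and read off $K(0)=\lambda-2\mu$ from \eqref{curvwitha} and $\nabla K(0)=0$ from rotational symmetry. You differ in two places. First, where the paper dismisses the initial claim as ``an immediate consequence of the calculations above,'' you verify the hypotheses of Proposition \ref{charsol} explicitly via $(\log|K|)'(t)=4\mu a(t)$, equivalently $J\nabla\log|K|=2\mu\,\partial_\theta$; this is correct and more self-contained, and your observation that a non-constant solution never crosses the separatrix $a\equiv 2\mu/\lambda$ (so $K\neq 0$ throughout) fills in a hypothesis of Proposition \ref{charsol} that the paper leaves implicit. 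Second, the crux of the converse direction --- oddness of $b$ in $r$ --- is justified by a genuinely different mechanism: the paper chooses $b(0)=0$, notes $b'(0)=1$, and simply asserts that $b$ is odd on an interval about $0$, which implicitly rests on uniqueness for the autonomous equation $b'=1/a\left(b^2/4\right)$, whose right-hand side is even in $b$, so that $-b(-r)$ is again a solution and must coincide with $b$; you instead use real-analyticity of solutions of the polynomial ODE \eqref{MasterODE} and the substitution $v=\sqrt{t}$ to write $r=2v\,\psi(v^2)$ with $\psi$ analytic, $\psi(0)=1$, and invert by the analytic inverse function theorem (noting the inverse of an odd map is odd). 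Your route costs the --- standard but unstated in the paper --- analyticity input, in exchange for making the vanishing of all even derivatives in \eqref{smoothextcond} completely manifest and upgrading the extension from smooth to analytic; the paper's parity-plus-uniqueness argument is lighter and works in the smooth category. One cosmetic slip: your statement of the criterion fixes $b'(0)=1$ rather than $\pm 1$, but since $a>0$ forces $b'>0$ via \eqref{intode}, this is harmless.
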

\begin{proof}
The first statement is an immediate consequence of the calculations above. To prove the second statement we first recall the standard result -- see~\cite[Lemma A.2]{MR2302600} -- that a metric of the form $g=\d r^2+b(r)^2 \d \theta^2$ for some smooth non-vanishing function $b$ on $(0,R)$ extends smoothly to $\mathbb{D}(R)$ if and only if 
\begin{equation}\label{smoothextcond}
\lim_{r \to 0}b^{\prime}(r)=\pm 1, \quad \lim_{r \to 0} \frac{\d^{2k}b(r)}{\d r^{2k}}=0 \;\;\forall k \in \mathbb{N}_0. 
\end{equation}
Suppose the metric associated to a solution of \eqref{intode} extends smoothly to the topological disk, then it follows with the continuity of the function $a$ and (\ref{intode}, \ref{smoothextcond}) that $\lim_{t\to 0}a(t)=1$. 

Conversely, suppose the function $a$ satisfies $\lim_{t\to 0}a(t)=1$ so that $a$ smoothly extends to a solution of \eqref{MasterODE} on the interval $(-\varepsilon,T)$ for some $\varepsilon \in (0,\infty]$. Choosing the initial condition $b(0)=0$ we obtain a solution to \eqref{intode} defined in some neighborhood of $0$ which satisfies $b^{\prime}(0)=1$. Thus, there exists an interval $(-R,R)$ about $t=0$ on which $b$ is an odd function of $r$.

We obtain from \eqref{curvform} and \eqref{MasterODE}
\begin{equation}\label{curvwitha}
K(t)=\lambda -\frac{2\mu}{a(t)}
\end{equation}
which, assuming $\lim_{t\to 0}a(t)=1$, converges to $\lambda -2\mu$ as $t\to 0$. Because of the rotational symmetry of the metric $g$, we have $\nabla K(0)=0$. 
\end{proof}

More generally, we have the following result relating the qualitative behavior of solutions of \eqref{MasterODE} to the geometry of the metrics \eqref{speccoord}.
\begin{lem}\label{ODEtoGeomProp}
 If $a>0$ is defined on $(T_0,T_1)$ with $0\leq T_0<T_1\leq \infty$ and solves \eqref{MasterODE} for some $\lambda,\mu\neq 0$,  then the following relationships hold between the behavior of $a$ and the geometry of the metric $g$ given by \eqref{speccoord}:
 \begin{enumerate}
  \item \label{ODEtoGeom1}$g$ has positive curvature if and only if  $a$ is strictly increasing and has negative curvature if and only if $a$ is strictly decreasing;
  \item \label{ODEtoGeom2}$g$ has bounded curvature if and only if $a$ is strictly bounded away from $0$;
  \item\label{ODEtoGeom3} $g$ is complete if and only if 
  \begin{enumerate}
      \item \label{ODEtoGeom3a}$0=T_0<T_1<\infty$, $a(0)=1$ and $\int_0^{T_1} \frac{a(t)}{t} dt =\infty$; or
      \item \label{ODEtoGeom3b}$0=T_0<T_1=\infty$, $a(0)=1$ and $a$ is strictly bounded away from $0$; or
      \item \label{ODEtoGeom3d}$0<T_0<T_1=\infty$,  $\int_{T_0}^{2 T_0} \frac{a(t)}{t} dt=\infty$ and $a$ is strictly bounded away from $0$.
 \end{enumerate}
  \item \label{ODEtoGeom4} if $T_1=\infty$ and $\lim_{t\to \infty} a(t)=a_\infty>0$, then $g$ is asymptotic to the end of a flat cone of angle $2 \pi a_\infty^{-1}$;
\end{enumerate}
\end{lem}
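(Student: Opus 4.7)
Items (1) and (2) are immediate from formulas already derived for the curvature of \eqref{speccoord}. Formula \eqref{curvform} gives $K(t) = \tfrac12 a'(t)/a(t)^3$, so the sign of $K$ agrees with that of $a'$ (since $a>0$), proving (1). Formula \eqref{curvwitha} reads $K(t) = \lambda - 2\mu/a(t)$, which is bounded on the interval of definition precisely when $1/a$ is bounded, i.e.\ precisely when $a$ is bounded away from $0$, proving (2).

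For (3) the plan is to pass to the arc-length parameter $r$ given by $\d r = a(t)\,\d t/\sqrt{t}$, so that the radial distance between two values of $t$ equals $\int a(t)\,\d t/\sqrt{t}$, and then to analyse each end separately. The inner end at $t\to T_0$ closes up smoothly to an interior point exactly when $T_0=0$ and $a(0)=1$, by Lemma \ref{smoothext}; in every other configuration, completeness at that end requires the radial integral to diverge there. For an end sitting at a finite positive value of $t$---either $T_1<\infty$ as in (3a) or $T_0>0$ as in (3d)---the factor $1/\sqrt{t}$ is bounded above and below by positive constants in a neighbourhood of that end, so divergence of $\int a/\sqrt{t}\,\d t$ is equivalent to divergence of $\int a(t)/t\,\d t$, which yields the stated integral conditions. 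For an end at $T_1=\infty$, the integral $\int^\infty a(t)\,\d t/\sqrt{t}$ diverges whenever $a$ is bounded below by a positive constant; conversely, if $a(t)\to 0$, then part (1) together with the ODE \eqref{MasterODE}, whose dominant form as $a\to 0$ is $a'\approx -4\mu a^2$, forces $a(t)\sim 1/(4\mu t)$ and makes the integral converge, so the end is incomplete. The same comparison excludes an end with $T_0=0$ and finite $a(0)\neq 1$, since then $\int_0 a/\sqrt{t}\,\d t$ converges; this is how one sees that the listed cases exhaust the complete situations.

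For (4), I would substitute $\rho = 2a_\infty\sqrt{t}$ in \eqref{speccoord} to rewrite the metric as $(a(t)/a_\infty)^2\,\d\rho^2 + (\rho/a_\infty)^2\,\d\theta^2$. Since $a(t)/a_\infty\to 1$ as $t\to\infty$, the metric is asymptotic to $\d\rho^2 + \rho^2\,\d\tilde\theta^2$ with $\tilde\theta = \theta/a_\infty$; a circle of length $2\pi$ in $\theta$ becomes a circle of length $2\pi/a_\infty$ in $\tilde\theta$, identifying the asymptotic model as the end of a flat cone of angle $2\pi a_\infty^{-1}$. The principal obstacle lies in (3): verifying that the three listed endpoint configurations genuinely exhaust the complete cases forces one to exploit the quantitative asymptotic $a\sim 1/(4\mu t)$ extracted from \eqref{MasterODE} itself, rather than relying on purely qualitative information about $a$.
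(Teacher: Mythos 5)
Your proposal is correct and follows the same overall decomposition as the paper's proof: items (1) and (2) read off from \eqref{curvform} and \eqref{curvwitha}, item (3) from the radial length element $\d r = a(t)t^{-1/2}\,\d t$ combined with Lemma \ref{smoothext} at a smooth closure and an \textsc{ode} comparison when $a\to 0$, and item (4) from the convergence $a\to a_\infty$. One step deserves comment, because there you are sharper than the paper, and usefully so. At the end $t\to\infty$ the paper argues via the cubic bound $a'(t)\leq -C_1a(t)^3$, giving $a(t)\leq (C_1t-C_2)^{-1/2}$ and hence $\int^\infty a(t)t^{-1}\,\d t<\infty$; but the quantity that actually measures completeness of that end is $\int^\infty a(t)t^{-1/2}\,\d t$, for which the bound $a(t)=O(t^{-1/2})$ is borderline (it only produces the divergent majorant $\int^\infty t^{-1}\,\d t$). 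Your quadratic comparison --- since $\frac{\lambda}{2\mu}a-1\to -1$, eventually $a'(t)\leq -2\mu a(t)^2$ with necessarily $\mu>0$, whence $a(t)=O(t^{-1})$ --- yields $\int^\infty a(t)t^{-1/2}\,\d t\leq C\int^\infty t^{-3/2}\,\d t<\infty$ and genuinely closes the step; in effect it repairs a small imprecision in the paper's displayed estimate (whose cubic exponent appears to be a slip for the quadratic one). Conversely, for item (4) the paper is slightly more quantitative than you: it records $a(t)=a_\infty+O(e^{-ct})$ from exponential convergence to the hyperbolic equilibrium of \eqref{MasterODE}, which gives $b(r)=a_\infty^{-1}r+O(1)$ via \eqref{intode}; your change of variables uses only $a/a_\infty\to 1$ and thus gives $b(r)=a_\infty^{-1}r+o(r)$ unless you add the rate, which is free from linearizing at $a=\gamma$. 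Your treatment of (2), a direct equivalence from \eqref{curvwitha} alone, is also marginally simpler than the paper's, which routes the forward implication through the autonomous structure of the \textsc{ode}.

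One caveat, which you share with the printed statement rather than introduce: your closing claim that the three listed subcases of (3) exhaust the complete situations (arguing only that $T_0=0$ with finite $a(0)\neq 1$ gives a convergent inner integral) does not account for $T_0=0$ with $a(t)\to\infty$ as $t\to 0$, where $\int_0^{\epsilon} a(t)t^{-1/2}\,\d t=\infty$ and the inner end is complete --- a hyperbolic cusp, exactly the case the paper later invokes for the complete metrics $g_8$. Your local criterion --- smooth closure precisely when $T_0=0$ and $a(0)=1$, and otherwise completeness at an end precisely when the radial integral diverges there --- is the correct general statement and does cover this case; it is only the exhaustion sentence that, like the trichotomy as printed in the lemma, overlooks it.
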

\begin{proof}
Item \eqref{ODEtoGeom1} is an immediate consequence of \eqref{curvform}.  The converse direction of Item \eqref{ODEtoGeom2} follows from \eqref{curvwitha}. On the other hand, if $a$ is not strictly bounded away from the zero, then, as \eqref{MasterODE} is an autonomous first order \textsc{ode}, $T_1=\infty$ and $\lim_{t\to \infty} a(t)=0$ so $K\to -\infty$. 
Item \eqref{ODEtoGeom3a}  is an immediate consequence of the form of the metric and Lemma \ref{smoothext}.  Likewise, Items \eqref{ODEtoGeom3b} and Items \eqref{ODEtoGeom3d} will follow from the form of the metric and Lemma \ref{smoothext} provided we show that if $\lim_{t\to \infty} a(t)=0$, then $\int_{T_0+1}^\infty \frac{a(t)}{t} dt<\infty$.  However, from \eqref{MasterODE} it is clear that if $\lim_{t\to \infty} a(t)=0$, then in all cases there is a constant $C_1>0$, depending on $\mu$ and $\lambda$, and a $T_*>T_0$, depending on $a$,  so for $t>T_*$,
 $$a'(t)\leq -C_1 a(t)^3.$$
Standard \textsc{ode} comparison then implies that for $t>T_*$, we have 
$$
a(t)\leq \frac{1}{\sqrt{C_1 t-C_2}}$$
for some $C_2$ which proves the claim.

If $\lim_{t\to \infty}a(t)=a_{\infty}$, then $a(t)=a_{\infty}+O(e^{-t})$ and so $\int_{2 T_0}^\infty \frac{a(t)}{t}=\infty$.  Hence, by \eqref{intode} $b$ is definte on $(R_0, \infty)$ and has the asymptotics
$$
b(r)=a_{\infty}^{-1} r +O(1).
$$
so the associated metric is asymptotic to the end of the claimed cone.
\end{proof}

\section{The classification}\label{classi}
We now analyze the solutions of \eqref{MasterODE} in order to prove Theorem \ref{ListofSolitonsThm}.  The constant solution $a\equiv 0$  separates the space of solutions of \eqref{MasterODE} 
into positive and negative solutions. For our purposes it sufficies to study positive solutions. Let $\mathcal{S}^+$ be the $3$-dimensional space of triples $(\mu,\gamma,a)$ where $\mu, \gamma \in \R_*$ -- $\gamma$ possibly infinite -- and $a$ is a smooth positive real-valued function defined on some interval $I$ which solves   
\begin{equation}\label{SlaveODE}
a^{\prime}(t)=4\mu a(t)^2\left(\frac{a(t)}{\gamma} -1\right).  
\end{equation}
There are three natural symmetries of $\mathcal{S}^+$. First, we let $\R^+$ act by
$$
\alpha \cdot (\mu,\gamma,a)=\left(\frac{\mu}{\alpha^2},\gamma,\alpha a\right)
$$
The metrics $g$ associated to $a$ and $g_\alpha$ associated to $\alpha a$ are locally isometric, but not globally isometric. 
Second, we let $\R^+$ act by
$$
(\mu,\gamma,a)\cdot \beta=\left(\frac{\mu}{\beta^4},\gamma\beta^2,a\circ s_{\beta}\right)
$$
where $s_{\beta}$ denotes the map $t \mapsto t\beta^{-2}$. This corresponds to scaling the metric $g$ associated to $a$ by $\beta^2$. By Lemma \ref{smoothext}, if $0\in I$ and $a(0)=1$, then this is also true after acting by $\beta$.
Third, is the non-geometric symmetry where $\R$ acts by translation in $t$, i.e.~$a(t) \mapsto a(t-\tau)$ for $\tau \in \R$. This action leaves $\mu,\gamma$ unchanged. 

There are six orbits of the actions corresponding to $(\mu,\gamma)\in \set{-{1},{1} }\times \set{-{1},{\infty}, {1}}$ and within each orbit there are one-parameter families of geometrically distinct solitons.  Due to the fact that certain solutions to \eqref{SlaveODE} blow-up, some orbits contain more than one such family.

\subsection{The case where $\gamma=\infty$}  
In this case $\lambda=0$ so we are considering metrics of steady solitons.
Moreover, \eqref{MasterODE} simplifies to 
$$
a^{\prime}(t)=-4\mu a(t)^2 
$$
whose solutions are given explicitly by 
$$
a(t)=\frac{1}{4\mu t+\varphi}
$$
for some constant $\varphi$. Qualitatively, this $2$-parameter family of solutions splits into three types, depicted in Figure \ref{FigLMZ}.\begin{figure} \label{FigLMZ}
\begin{center}
\def\svgwidth{5in}
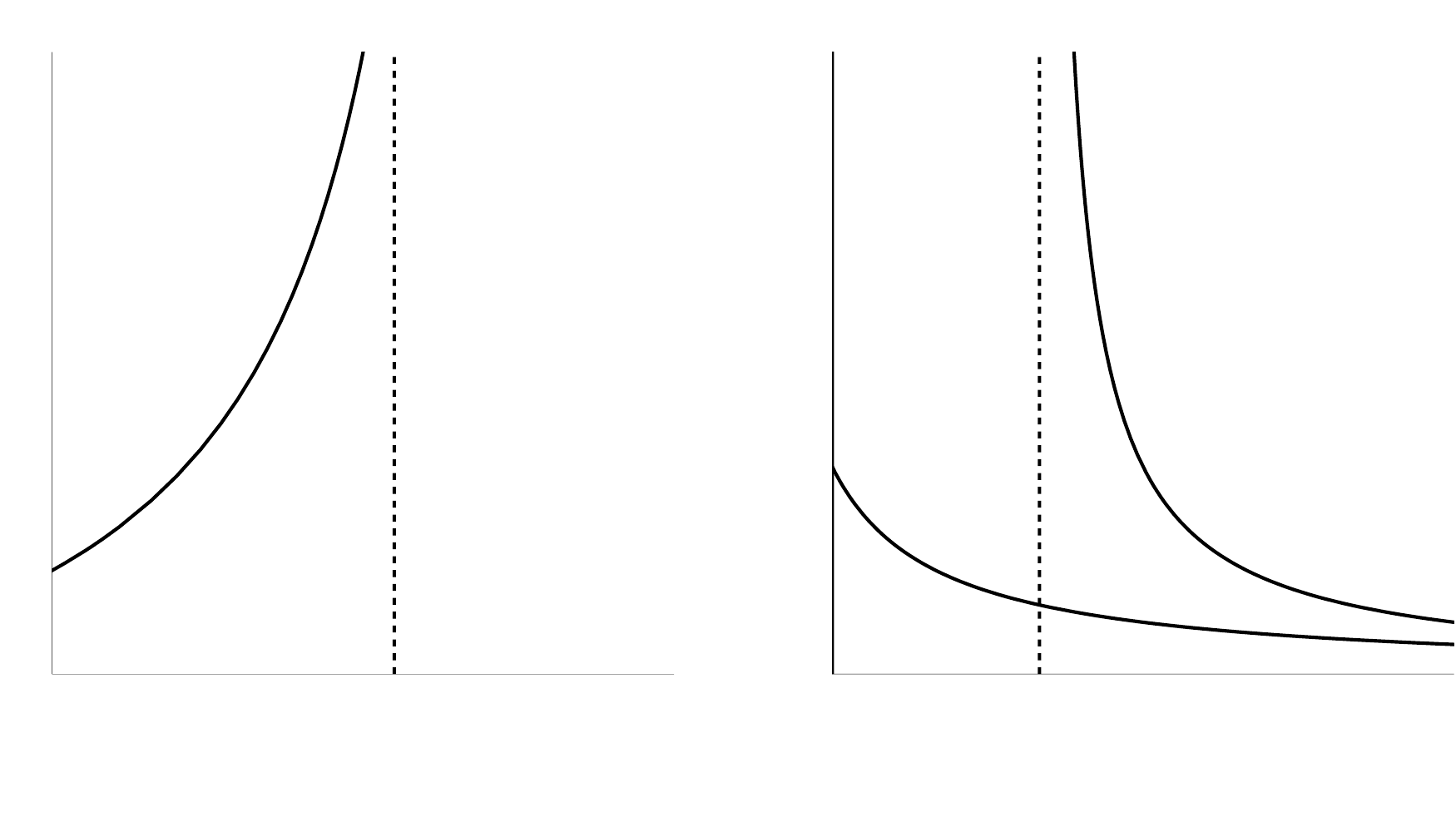
\caption{On the left solutions with $\gamma=\infty$ and $\mu<0$ on the right solutions with $\gamma=\infty$ and $\mu>0$.} 
\end{center}
\end{figure}
\subsubsection{$\mu=-\nu^2<0$} We may assume $\varphi>0$, otherwise $a<0$. It follows that  $a^{\prime}>0$ is strictly increasing and hence the solutions blow up in finite time. Using the smooth extension condition of Lemma \ref{smoothext} we obtain the solutions
$$
a_1(t)=\frac{1}{1-4\nu^2t}
$$
whose associated complete metrics $g_1(\nu)$ are Hamilton's steady \textit{cigar solitons}~\cite{MR954419} which are defined on the topological disk and given in polar coordinates by
$$
g_1(\nu)=\d r^2+\frac{1}{\nu^2}\tanh^2(\nu r)\d \theta^2. 
$$
By inspection these solutions are complete and have bounded positive curvature and are asymptotic to a cylinder of radius $\nu$.  Moreover, $g_1(\nu)=\nu^2 g_1(1)$.
\subsubsection{$\mu=\nu^2 >0$ and $\varphi>0$} The solutions exist for all (positive) times, are bounded, and approach $0$ as $t \to \infty$. It follows from Lemma \ref{ODEtoGeomProp} that the associated metrics have unbounded negative Gauss curvature and are incomplete. 
Using the smooth extension condition of Lemma \ref{smoothext} we obtain the solutions
$$
a_2(t)=\frac{1}{1+4\nu^2t}
$$
whose associated family of metrics -- called \textit{exploding solitons} in~\cite{MR2302600} -- $g_2(\nu)$ are given in polar coordinates by 
$$
g_2(\nu)=\d r^2+\frac{1}{\nu^2}\tan^2(\nu r)\d \theta^2. 
$$
Clearly, $g_2(\nu)=\nu^2 g_2(1)$.
Furthermore, at $r=\frac{\pi}{2\nu}$ the metrics have the asymptotics
$$
g_2(\nu)=\d r^2+\left(\frac{1}{\nu^4 \left( r-\frac{\pi}{2\nu}\right)^2}+O(1)\right)\d \theta^2. 
$$
\subsubsection{$\mu>0$ and $\varphi \leq 0$} The positive part of the solution exists on the time interval $(-\frac{\varphi}{4\mu},\infty)$, and approaches $\infty$ as $t \to -\frac{\varphi}{4\mu}$ and $0$ as $t \to \infty$.  By Lemma \ref{ODEtoGeomProp},  the associated metrics $g_3$ have unbounded negative Gauss curvature and are incomplete. 
Writing $\nu^2=- \varphi$, 
and applying the $\beta$ action with $\beta=\sqrt{\mu}$, gives 
$$
a_3(t)=\frac{1}{ 4 t- \nu^2}.
$$
In cylindrical coordinates $(\rho, \theta)$ this gives metrics $g_3(\nu)$ are given for $\nu>0$ as
\begin{equation*}
 g_3(\nu)=\d\rho^2 + \frac{ \nu^2}{\tanh^2 \left(  \nu \rho\right) } \d\theta^2
\end{equation*}
while for $\nu=0$
\begin{equation*}
 g_3(0)=\d\rho^2 +  \frac{1}{ \rho^2 } \d\theta^2.
\end{equation*}
Thus, the metrics are all asymptotically cylindrical with radius $ \nu$ as $\rho\to -\infty$. At $\rho=0$, the metrics are asymptotically
$$g_3(\nu)=\d\rho^2 +  \left(\frac{1}{ \rho^2 } +O(1)\right)\d\theta^2$$
and so to leading order behave like $g_2(1)$ as desired. 
\subsection{The case where $\infty>\gamma>0$} Besides the separatrix $a\equiv 0$, there is a positive separatrix $a\equiv \gamma$ and the orbits split into six families, depicted in Figure \ref{FigLMP}.

\begin{figure}\label{FigLMP}
\begin{center}
\def\svgwidth{5in}
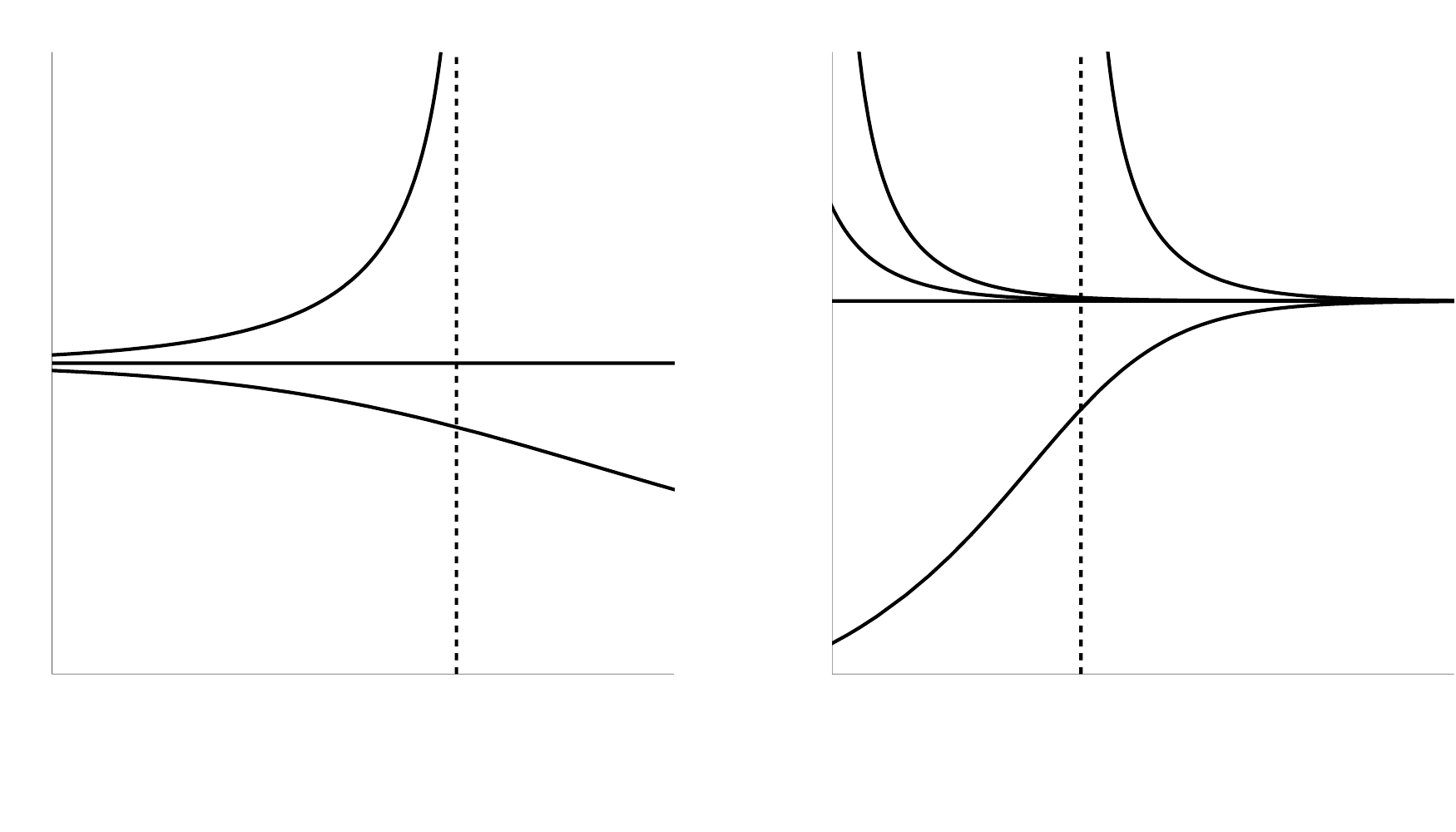
\caption{On the left solutions with $\infty>\gamma>0$ and $\mu>0$ on the right solutions with  $\infty>\gamma>0$  and $\mu<0$.} 
\end{center}
\end{figure}

\subsubsection{ $\mu>0$ and $a>\gamma>0$}\label{a4plus}
In this case the separatrix $a\equiv \gamma$ is unstable and so solutions starting above it are strictly increasing. If $1>\gamma$, then there is a solution $a_4^+$ which satisfies $a_4^+(0)=1$.  An \textsc{ode} comparison argument implies that the $a_4^+$ blow-up at some finite time $T_1(\mu,\gamma)>0$ given by
\begin{equation*}
 4\mu T_1(\mu,\gamma) = -1-\frac{1}{\gamma} \log (1-\gamma).
\end{equation*}
By applying an appropriate $\beta$ action, we may take $T_1=\frac{1}{4}$ and so $\mu=\mu(\gamma)$ and $\lambda=\lambda(\gamma)$.
The solutions have the following asymptotics at $T_1=\frac{1}{4}$:
\begin{equation*}
 a_4^+(t)=\frac{1}{\sqrt{\lambda(1-4t)}}+\frac{\gamma}{3}+o\left(1 \right).
\end{equation*}
Clearly,  $a_4^+$ is integrable and hence Lemma \ref{smoothext} and Lemma \ref{ODEtoGeomProp} imply that there are  positively curved, incomplete metrics, $g_4^+(\gamma)$, on the disk $\mathbb{D}$.   

One verifies that each $g_4^+(\gamma)$ extends smoothly to $\bar{\mathbb{D}}$ and, as $T_1=\frac{1}{4}$ the length of $\partial \mathbb{D}$ is $2\pi$.  Moreover, it follows directly from \eqref{geocurv} that $\partial \mathbb{D}$ is totally geodesic.  Finally, $g_4^+(\gamma)$ converges to the flat metric on the disk $\mathbb{D}(1)$ as $\gamma\to 0$, while $g_4^+(\gamma)$ converges to the metric $g_1(1)$ as $\gamma\to 1$.
Hence,  $\dist(0, \partial \mathbb{D})\to \infty$ as $\gamma\to 0$ while $\dist(0,\partial \mathbb{D})\to 1$ as $\gamma\to 1$ and $\dist(0, \partial \mathbb{D})$ is a strictly decreasing function of $\gamma$.  We take the parameter $\nu$ to be this distance.

\subsubsection{ $\mu>0$ and $a<\gamma$}
Again the separatrix $a\equiv \gamma$ is unstable and so  solutions  starting below it are decreasing and tend to $0$ as $t\to \infty$. If $1<\gamma$, then there is a solution $a_5$ which satisfies $a_5(0)=1$ and so, by Lemma \ref{smoothext} and Lemma \ref{ODEtoGeomProp}, there exist  negatively curved, incomplete metrics $g_5(\gamma,\mu)$ with unbounded curvature on $\Real^2$.

Furthermore, $a_5$ has the asymptotic form as $t\to \infty$
\begin{equation*}
 a_5(t)=\frac{1}{1+4\mu t} +\frac{1}{\gamma} \frac{\log (1+4\mu t)}{(1+4\mu t)^2}+o\left( \frac{\log (1+4\mu t)}{(1+4\mu t)^2}\right).
\end{equation*}
Hence, $a_5(t)$ is a asymptotic to $a_2(t)$ as $t\to\infty$ and so taking $\mu=\nu^2$, $g_5(\nu)$ is asymptotic to $g_2(\nu)$ as claimed.

\subsubsection{ $\mu<0$ and $a<\gamma$}
In this case the separatrix $a\equiv \gamma$ is stable and so solutions starting below it are increasing and  $a(t)\to \gamma$ as $t\to \infty$. 
If $1<\gamma$, then there is a solution $a_6$ which satisfies $a_6(0)=1$ and so, by Lemma \ref{smoothext} and Lemma \ref{ODEtoGeomProp}, there exists a family of positively curved, complete metrics $g_6(\nu)$ on $\Real^2$ which are asymptotic to a cone of angle $\nu={2\pi}{\gamma^{-1}}\in (0, 2\pi)$.   

\subsubsection{ $\mu<0$ and $a>\gamma>0$}
In this case the separatrix $a\equiv \gamma$ is stable and so solutions starting above it are decreasing and $a(t)\to \gamma$ as $t\to \infty$. 
An \textsc{ode} comparison argument implies that solutions blow-up at some finite initial time $T_0$.  The geometry of the associated metrics  is quite different depending on whether $T_0<0$, $T_0=0$ or $T_0>0$.

If $T_0<0$ and $1>\gamma$, then there is a solution $a_7$ which satisfies $a_7(0)=1$ and so, by Lemma \ref{smoothext} and Lemma \ref{ODEtoGeomProp}, there exists a family of negatively curved, complete metrics $g_7(\nu)$ on $\Real^2$ which are asymptotic to a cone of angle $\nu=2\pi \gamma^{-1}\in (2\pi,\infty)$.   

If $T_0=0$, then any solution $a_8$ has the following asymptotics near $T_0=0$
\begin{equation*}
 a_8(t)= \frac{1}{\sqrt{-4\lambda t}}+o\left(\frac{1}{\sqrt{t}}\right)
\end{equation*}
and so  $\frac{a_8(t)}{t}$ is not locally integrable.  By the $\beta$ symmetry we may take $\lambda=-1$. Hence, by Lemma \ref{ODEtoGeomProp}, there is a family of negatively curved, complete metrics $g_8(\nu)$ on a cylinder which are asymptotic at one end to a cone with angle $\nu=2\pi \gamma^{-1} \in (0,\infty)$.   
By \eqref{curvwitha}, $K\to -1$ as $t\to 0$, while the length of the curve $\set{t=t_0}$ goes to $0$ as $t_0\to 0$ and hence $g_8(\nu)$ is asymptotic at the other end to a hyperbolic cusp.

If $T_0>0$, then any solution $a_9$ has the following asymptotics near $T_0>0$
\begin{equation*}
 a_9(t)= \frac{1}{\sqrt{-4\lambda (t-T_0)}}+o\left(\frac{1}{\sqrt{t-T_0}}\right)
\end{equation*}
and so $\frac{a_9(t)}{t}$ is locally integrable.  Hence using Lemma \ref{ODEtoGeomProp}, it follows that there is a family of negatively curved, incomplete metrics on $\mathbb{D}_*$ which are asymptotic to a cone with angle $\nu=2\pi\gamma^{-1}$ at the puncture. As with the family $g_4(\nu)$, the metrics smoothly extend to $\bar{\mathbb{D}}_*$ and $\partial   \bar{\mathbb{D}}^*$ is totally geodesic.  By the $\beta$ symmetry we may take the length of $\partial \mathbb{D}_*$ to be $2\pi$ which gives the $g_9(\nu)$.

\subsection{The case where  $-\infty<\gamma<0$}  In this case there is no positive separatrix and there are four qualitatively different solutions as pictured in Figure \ref{FigLMN}.  
\begin{figure}\label{FigLMN}
\begin{center}
\def\svgwidth{5in}
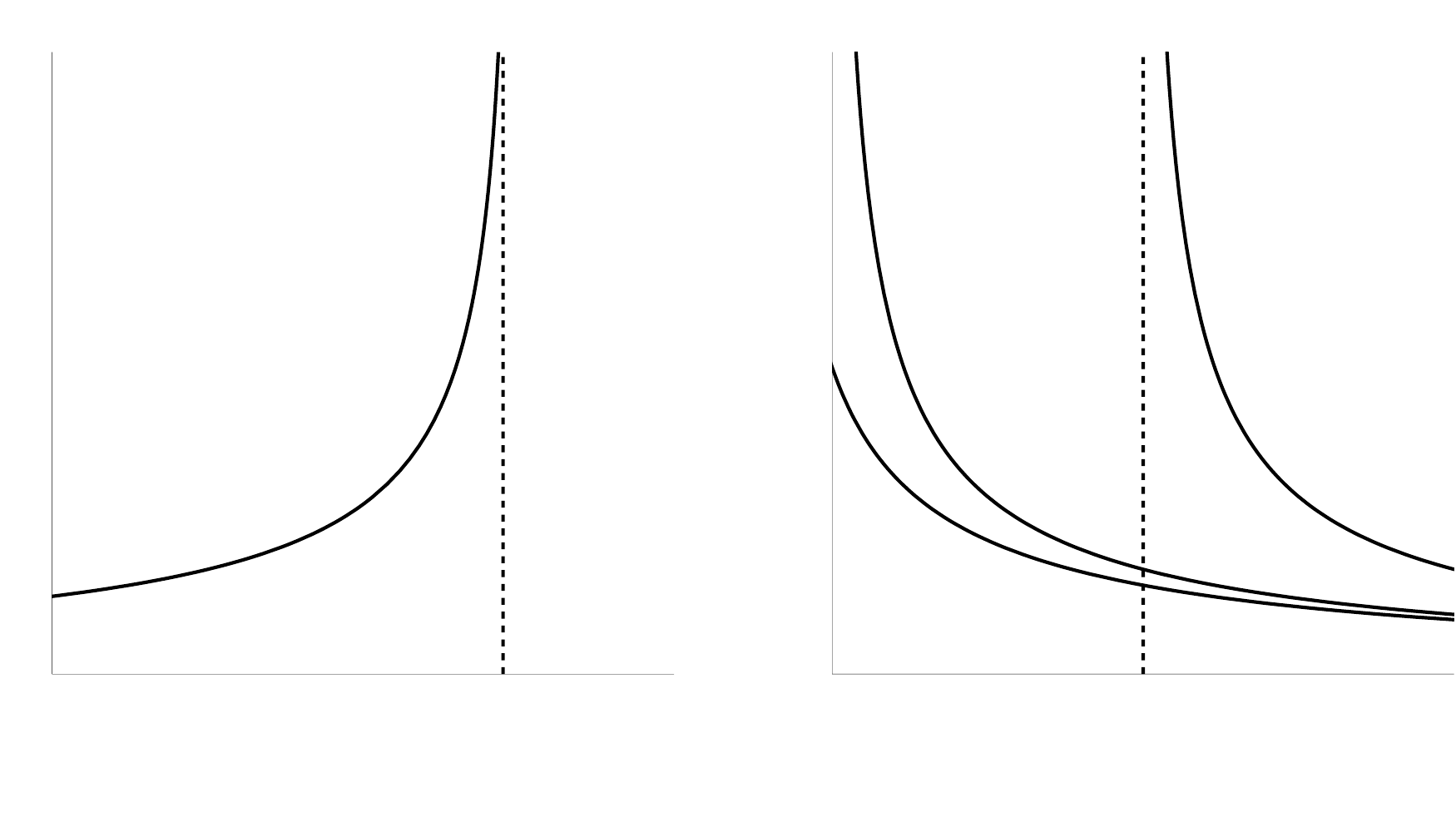
\caption{On the left solutions with $-\infty<\gamma<0$ and $\mu>0$ on the right solutions with  $-\infty<\gamma<0$ and $\mu<0$.} 
\end{center}
\end{figure} 

\subsubsection{$\mu<0$}
In this case the separatrix $a\equiv 0$ is unstable and so solutions starting above it are strictly increasing, moreover there exists a solution $a_4^-$ which satisfies $a_4^-(0)=1$. Qualitatively this solution is the same as the solution $a_4^+$ which satisfies $a_4^+(0)=1$. Indeed, by the $\beta$ symmetry, we may take the blow-up time to be $T_1=\frac{1}{4}$ and so $\mu=\mu(\gamma)$ and $\lambda=\lambda(\gamma)$. Hence, the metrics $g_4^-(\gamma)$ then have similar properties as the metrics produced in Section \eqref{a4plus}.  However, as $\gamma\to -\infty$ the metrics $g_4^-(\gamma)$ converge to the round metric on the hemi-sphere and so $\nu=\dist(0, \partial \mathbb{D})\in (\frac{\pi}{2}, \infty)$.

\subsubsection{$\mu>0$}
In this case the separatrix $a\equiv 0$ is stable and so solutions starting above it are decreasing and $a(t)\to 0$ as $t\to \infty$. 
An \textsc{ode} comparison argument implies that solutions blow-up at some finite initial time $T_0$.  The geometry of the associated metrics  is quite different depending on whether $T_0<0$, $T_0=0$ or $T_0>0$.

If $T_0<0$, then there is a solution $a_{10}$ which satisfies $a_{10}(0)=1$ and so, by Lemma \ref{smoothext} and Lemma \ref{ODEtoGeomProp}, there exists a family of negatively curved, incomplete metrics $g_{10}(\nu)$ with unbounded curvature on $\Real^2$ which are asymptotic to $g_2(\nu)$.

If $T_0=0$, then any solution $a_{11}$ has the following asymptotics near $T_0=0$
\begin{equation*}
 a_{11}(t)= \frac{1}{\sqrt{-4\lambda t}}+o\left(\frac{1}{\sqrt{t}}\right)
\end{equation*}
and so  $\frac{a_{11}(t)}{t}$ is not locally integrable. By the $\beta$ symmetry we may take $\lambda=-1$.  Hence, by Lemma \ref{ODEtoGeomProp}, there is a family of negatively curved, incomplete metrics with unbounded curvature on a cylinder. These metrics are asymptotic at one end to a hyperbolic cusp and asymptotic at the other end  to a member of the family $g_2(\nu)$ and give the family $g_{11}(\nu)$ as claimed. 

If $T_0>0$, then any solution $a_{12}$ has the following asymptotics near $T_0>0$
\begin{equation*}
 a_{12}(t)= \frac{1}{\sqrt{-4\lambda (t-T_0)}}+o\left(\frac{1}{\sqrt{t-T_0}}\right)
\end{equation*}
and so $\frac{a_{12}(t)}{t}$ is locally integrable.   Hence, by Lemma \ref{ODEtoGeomProp}, there is a family of negatively curved, incomplete metrics with unbounded curvature on $\mathbb{D}^*$. As with $g_4$, these metrics smoothly extend to $\bar{\mathbb{D}}_*$ and $\partial   \bar{\mathbb{D}}_*$ are totally geodesic. By the $\beta$-action $\partial_1 \mathbb{D}_*$ may be taken to have length $2\pi$. At the other end, these metrics are asymptotic to a member of the $g_2(\nu)$ family at the puncture and give the family $g_{12}(\nu)$ as claimed. 

This completes the analysis needed to prove Theorem \ref{ListofSolitonsThm}.

\subsection{Proof of Theorem 2} 
We now prove Theorem \ref{MainClassificationThm}.  First a simple lemma:
\begin{lem} \label{dumblem}
 Suppose $b\in C^\infty((0,R))$ is given by Lemma \ref{smoothext}. If $g=\d r^2 +b(r)^2 \d \theta^2$ is a non-constant curvature metric on $\mathbb{D}_*(R)$, then $Isom^+(\mathbb{D}_*(R),g)$, the group of orientation-preserving isometries, is isomorphic to $\mathbb{S}^1$.  
\end{lem}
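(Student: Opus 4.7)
The plan is to exhibit the obvious $\mathbb{S}^1$ subgroup of rotations $\theta \mapsto \theta + \alpha$ and then show that every element of $Isom^+(\mathbb{D}_*(R), g)$ is such a rotation. The strategy is to use strict monotonicity of the Gauss curvature $K$ in $r$ to force any isometry to preserve the radial coordinate; on each concentric circle the isometry then reduces to a rotation, and the warped-product structure forces the rotation angle to be independent of $r$.

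First I would verify that $K$ is a strictly monotonic function of $r$ on $(0,R)$. By \eqref{curvwitha} we have $K = \lambda - 2\mu/a$, so non-constant curvature is equivalent to $a$ being non-constant. Since \eqref{MasterODE} is an autonomous first-order \textsc{ode} whose only positive equilibrium is the separatrix $a \equiv 2\mu/\lambda$ (and there is no positive equilibrium at all when $\lambda = 0$), uniqueness of solutions forces any non-equilibrium positive solution to satisfy $a'(t) \neq 0$ everywhere, hence $a(t)$ is strictly monotonic. Since $t = b(r)^2/4$ and $b'(r)>0$ by the construction in Lemma \ref{smoothext}, $K(r)$ is then strictly monotonic on $(0,R)$.

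Next, I would take any $\phi \in Isom^+(\mathbb{D}_*(R), g)$ and note that $K\circ \phi = K$, so injectivity of $K$ as a function of $r$ gives $r \circ \phi = r$. In local coordinates, this forces $\phi$ to have the form $(r,\theta)\mapsto (r,\Theta(r,\theta))$. A short computation comparing $\phi^* g$ with $g$ then yields $\partial_r\Theta = 0$ and $\partial_\theta\Theta = \pm 1$, and the orientation-preserving hypothesis selects the $+1$ case, so $\Theta = \theta + \alpha$ for a constant $\alpha$, identifying $\phi$ with a rotation and completing the proof.

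The main obstacle is the first step. Without strict monotonicity of $K(r)$ there could be an ``annulus-swapping'' isometry exchanging the two ends of $\mathbb{D}_*(R)$, or more generally an isometry carrying the circle $\set{r=r_0}$ to a circle $\set{r=r_1}$ with $r_1\neq r_0$; these possibilities are awkward to rule out directly via Killing-field dimension counts alone. Fortunately, the autonomous \textsc{ode} structure established in Section \ref{Sec2} makes the required monotonicity essentially immediate, after which the remaining computations are routine.
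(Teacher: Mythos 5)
Your proposal is correct and follows essentially the same route as the paper's proof: exhibit the rotations $\psi_\phi$, then use the fact that positive solutions of \eqref{MasterODE} are either equilibria or strictly monotone (so that, via \eqref{curvwitha}, $K$ is injective in $r$) to force any isometry to preserve $r$ and hence be a rotation. You merely fill in two details the paper leaves implicit -- the uniqueness argument showing $a'$ never vanishes off the separatrix, and the pullback computation yielding $\partial_r\Theta=0$, $\partial_\theta\Theta=+1$ -- both of which are fine.
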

\begin{proof}
Clearly, for each $\phi\in \mathbb{S}^1$, the maps $\psi_\phi :(r, \theta)\mapsto (r, \theta+\phi)$ are orientation preserving isometries.
Hence,  $\mathbb{S}^1$ may be identified with a subgroup of $Isom^+(\mathbb{D}_*(R),g)$. 
On the other hand, as solutions to \eqref{MasterODE} are either strictly monotone or constant curvature, \eqref{curvwitha} implies that if  $\psi:(\mathbb{D}_*(R), g)\to (\mathbb{D}_*(R), g)$  is an isometry, then $r(p)=r(\psi(p))$ and so $\psi=\psi_\phi$ for some $\phi$.  
\end{proof}

\begin{proof}[Proof of Theorem \ref{MainClassificationThm}]
 As $M_*$ is non-empty,  Proposition \ref{densityprop} implies that $K\neq 0$ and $M\backslash M_*$ consists of  isolated points $p_i\in M$ -- in particular, $M_*$ is connected.  
 
By the analysis of Section \ref{Sec2} and Theorem \ref{ListofSolitonsThm}, for each point $p\in M_*$ there is a simply-connected neighborhood $V_p\subset M_*$ about $p$ and an isometric embedding $\imath_p:V_p\to N_p$ where $(N_p,g_p)$ is a scaling of a member of one of the twelve families of by Theorem \ref{ListofSolitonsThm}.  
Clearly, for $q \in M_*$ with $V_p\cap V_q\neq \emptyset$, we have $(N_p,g_q)=(N_q,g_q)$ and there is an element $\psi_{p,q}\in Isom(N_p,g_p)$ so that $\psi_{p,q}\circ \imath_q=\imath_p$ on $V_p\cap V_q$. As $M_*$ is connected, we conclude that $(N_p,g_p)=(N,\beta^2 g)$ for $\beta>0$ and $(N,g)$ a fixed member of one of the families of Theorem \ref{ListofSolitonsThm}. As $p\in M_*$, we may delete a point -- if needed -- and take $(N,g)=\mathbb{D}_*(R)$ for $0<R\leq \infty$ and $g=\d r^2+b(r)^2 \d \theta^2$.

Given points $p,q \in M_*$, a continuous path $\gamma : [0,1]\to M_*$ connecting $p$ and $q$, and a fixed isometric embedding $\imath_p:V_p\to M$ there is a unique isometric embedding $\imath_{\gamma}:V_q\to N$ determined by $\imath_p$ and $\gamma$ in the obvious way -- this is a sort of developing map into $(N,g)$. For a fixed point $p_0\in M_*$, there is a group homomorphism $\Gamma: \pi_1(M_*, p_0)\to Isom(N,g)$ given by the holonomy of the above construction which is the obstruction to extending a choice of $\imath_{p_0}:V_{p_0}\to N$ to an isometric immersion $\imath:M_*\to N$. As $M$ is orientable as are (by inspection) all the models of Theorem \ref{ListofSolitonsThm} we have that $\Gamma: \pi_1(M_*, p_0)\to Isom^+(N,g)$.  Furthermore, as $\pi_1(M_*)$ is cyclic,  there is a generator $x\in \pi_1(M_*,p_0)$. We consider $\psi=\Gamma(x)$. By Lemma, \ref{dumblem} there is a $0<\phi\leq 2\pi$ so $\psi=\psi_\phi$ where $\psi_\phi:(r,\theta)\mapsto (r, \theta+\phi)$. Setting $\alpha=2\pi \phi^{-1} $ concludes the proof.
\end{proof}

\section{A variational characterization of two-dimensional gradient Ricci solitons}\label{VarSec}
Let $M$ denote a smooth two-manifold. On the space of Riemannian metrics $\mathcal{M}_{*}(M)$ of non-vanishing Gauss curvature on $M$ consider the functional defined by
$$
\mathcal{E}[g]=\int_M K_g\log |K_g|\mu_g
$$
where $\mu_g$ is the area-form of $g$. This functional has been applied to the study of Ricci flow on surfaces by Hamilton~\cite{MR954419} and Chow~\cite{Chow1991} -- in particular Hamilton observed that it is monotonically increasing along the Ricci flow on spheres with positive Gauss curvature.

In \cite{JacobThomasEntropy}, we observed that if $g$ is a positively curved metric which is critical for $\mathcal{E}$ with respect to compactly supported conformal variations, then $\hat{g}=K_g^{-3/2} g$ is the metric of a minimal surface in Euclidean $3$-space and that every negatively curved minimal surface metric arises in this way.  As such, it is interesting to consider critical points of $\mathcal{E}$ under other variations.  In this direction, we show that gradient Ricci soliton metrics are the only critical points of $\mathcal{E}$ with respect to area-preserving variations.

We say that $g \in \mathcal{M}_{*}(M)$ is an $\mathcal{E}$-critical metric if $\mathcal{E}$ is stationary at $g$ with respect to compactly supported area-preserving variations. Let $g$ be a smooth Riemannian metric and $h$ a smooth symmetric $2$-form on $M$. Writing $g_t=g+th$, we have
$$
\left.\frac{\partial}{\partial t}\right|_{t=0}\mu_{g_t}=\frac{1}{2}\,H\, \mu_g.
$$
where $H=\tr_g\! h$ is the trace of $h$ with respect to $g$. Hence, the variations that preserve area are precisely the deformations by trace-free symmetric $2$-forms. Moreover, we have (cf.~\cite[p.~99]{chowbook})
$$
\left.\frac{\partial}{\partial t}\right|_{t=0}K_{g_t}=- \frac{1}{4} \Delta_gH + \frac{1}{2} \Div_g\left(\Div_g\! \mathring{h}\right)- \frac{1}{2} HK_g 
$$
where $\mathring{h}$ denotes the trace-free part of $h$. A short computation yields
\begin{equation*}
 \delta_h\mathcal{E}[g]= \frac{1}{2}\int_{M}H K_g \log|K_g|+\left(\Div_g\left(\Div_g\! \mathring{h}\right)-\frac{1}{2} \Delta_g H - HK_g \right) (\log |K_{g}|+1)\vol.
\end{equation*}
If $h$ is compactly supported 
then integrating by parts gives
\begin{equation}\label{EuLag}
\delta_h\mathcal{E}[g]= -\frac{1}{4}\int_{M} H \left(\Delta_g \log |K_g| +2  K_g   \right)  -2\langle h, \mathring{\nabla}^2 \log |K_g|\rangle_g \vol,
\end{equation}
where $\langle a,b\rangle_g$ denotes the natural bilinear pairing on elements $a,b\in \Gamma(S^2(T^*M))$ obtained via $g$.
For $h$ trace-free \eqref{EuLag} simplifies to 
$$
\delta_h\mathcal{E}[g]=\frac{1}{2}\int_M\langle h, \mathring{\nabla}^2 \log |K_g|\rangle_g \,\vol.
$$
Hence, $g$ is an $\mathcal{E}$-critical metric if and only if the Gauss curvature $K$ of $g$ satisfies
$$
\mathring{\nabla}^2\log |K|=0. 
$$
The functional $\mathcal{E}$ is diffeomorphism invariant, i.e.~ for $\phi : M \to M$,  a diffeomorphism,
$$
\mathcal{E}[\phi^*g]=\mathcal{E}[g]
$$
for every $ g\in \mathcal{M}_*(M)$. Using Noether's principle we obtain a conservation law for $\mathcal{E}$: Let $X \in \Gamma(TM)$ be a compactly supported vector field and let $\phi_t$ denote its time $t$ flow. Then we have
$$
\phi_t^*g=g+tL_Xg+o(t). 
$$
As a consequence of the diffeomorphism invariance we obtain at an $\mathcal{E}$-critical metric
$$
\begin{aligned}
0=\delta_{L_Xg}\mathcal{E}[g]&=-\frac{1}{4}\int_M\tr_g(L_Xg)\left(\Delta_g \log |K_g| +2  K_g   \right)\mu_g,\\
&=-\frac{1}{2}\int_M\left(\Delta_g \log |K_g| +2  K_g\right)\Div_g\!X\,\mu_g
\end{aligned}
$$
which vanishes for every compactly supported vector field $X$ on $M$ if and only if 
$$
\Delta_g \log |K_g| +2  K_g
$$
is constant. Using Proposition \ref{charsol} we have thus shown:
\begin{thm}
Let $(M,g)$ be a Riemannian two-manifold with non-zero Gauss curvature. Then $(M,g)$ is a gradient Ricci soliton if and only if $g$ is a critical point of the functional $\mathcal{E}$ with respect to compactly supported area-preserving variations.  
\end{thm}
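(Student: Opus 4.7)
The plan is to compute the first variation of $\mathcal{E}$ at $g$ along a compactly supported symmetric $2$-tensor $h$, obtain an Euler--Lagrange system by separately varying by trace-free $h$ and by pure-trace $h$, and then identify the resulting equations with the characterization of gradient Ricci solitons provided by Proposition \ref{charsol}.

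First I would record the standard variational formulas: the area-form varies by $\tfrac{1}{2} H \mu_g$ with $H=\tr_g h$, and the Gauss curvature varies by $-\tfrac{1}{4} \Delta_g H + \tfrac{1}{2}\Div_g(\Div_g \mathring{h}) - \tfrac{1}{2} H K_g$. Inserting these into $\delta_h \mathcal{E}[g]$ and integrating by parts twice (using compact support) should produce the clean formula given in \eqref{EuLag}, namely
\begin{equation*}
\delta_h \mathcal{E}[g] = -\frac{1}{4}\int_M H\left(\Delta_g \log|K_g| + 2K_g\right) \vol - \frac{1}{2}\int_M \langle h, \mathring{\nabla}^2 \log|K_g|\rangle_g \vol.
\end{equation*}
The main bookkeeping issue here is handling the $\Div \Div \mathring{h}$ term, which after two integrations by parts pairs with $\log|K_g|$ to produce the Hessian inner product; no serious analytic difficulty arises because $h$ is compactly supported.

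Next, specializing to trace-free $h$ (the area-preserving variations), only the second term survives, so criticality under such $h$ is equivalent to $\mathring{\nabla}^2 \log|K_g|=0$. To extract the second soliton equation I would invoke the diffeomorphism invariance $\mathcal{E}[\phi^*g]=\mathcal{E}[g]$. Taking $h = L_X g$ for a compactly supported vector field $X$, one has $H = 2\Div_g X$, so at an $\mathcal{E}$-critical metric the identity
\begin{equation*}
0 = -\frac{1}{2}\int_M\left(\Delta_g \log|K_g| + 2K_g\right)\Div_g X\, \vol
\end{equation*}
holds for every compactly supported $X$. Since $\Div_g X$ ranges over all smooth compactly supported functions with zero mean (or, via density, all test functions), it follows that $\Delta_g \log|K_g| + 2K_g$ is a constant $2\lambda$.

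Combining the two conditions $\mathring{\nabla}^2 \log|K_g| = 0$ and $\Delta_g \log|K_g| = 2(\lambda - K_g)$, Proposition \ref{charsol} shows that $g$ is a gradient Ricci soliton metric with expansion constant $\lambda$, and the soliton potential is $f=\log|K_g|$. Conversely, for any gradient Ricci soliton metric both equations hold by Proposition \ref{charsol}, so substituting back into the first-variation formula yields $\delta_h\mathcal{E}[g]=0$ for every compactly supported trace-free $h$. The mildly subtle step I expect to need care with is justifying that the constancy of $\Delta_g \log|K_g| + 2K_g$ truly follows from vanishing against all $\Div_g X$; this reduces to the standard fact that a smooth function pairing to zero against every $\Div_g X$ of compact support must be locally constant, which on the connected manifold $M$ forces a single global constant.
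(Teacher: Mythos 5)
Your proposal is correct and follows essentially the same route as the paper: the same first-variation computation leading to \eqref{EuLag}, the same specialization to trace-free $h$ to obtain $\mathring{\nabla}^2\log|K_g|=0$, the same Noether/diffeomorphism-invariance argument with $h=L_Xg$ to force $\Delta_g\log|K_g|+2K_g$ constant, and the same appeal to Proposition \ref{charsol}. Your closing remark correctly handles the one point the paper leaves implicit, namely that vanishing of $\int_M u\,\Div_g X\,\mu_g$ for all compactly supported $X$ forces $\nabla u\equiv 0$ and hence, by connectedness of $M$, a single global constant.
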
 

\providecommand{\bysame}{\leavevmode\hbox to3em{\hrulefill}\thinspace}
\providecommand{\MR}{\relax\ifhmode\unskip\space\fi MR }
\providecommand{\MRhref}[2]{%
  \href{http://www.ams.org/mathscinet-getitem?mr=#1}{#2}
}
\providecommand{\href}[2]{#2}

\end{document}